\newcommand{\excise}[1]{}
\newcommand{\ve}{\boldsymbol}
\newtheorem{thm}{Theorem}
\newtheorem{lemma}[thm]{Lemma}
\newtheorem{cor}[thm]{Corollary}
\newtheorem{prop}[thm]{Proposition}
\theoremstyle{definition}
\newtheorem{example}[thm]{Example}{\bfseries}{\normalfont}
\newtheorem{remark}[thm]{Remark}{\bfseries}{\normalfont}
\newtheorem{defn}[thm]{Definition}{\bfseries}{\normalfont}
\newcommand{\ring}[1]{\ensuremath{\mathbb{#1}}}
\newcommand{\spn}{\mathrm{span}}
\newcommand{\rank}{\mathrm{rank}}
 \newcommand{\R}{\mathbb R}
 \newcommand{\Z}{\mathbb Z}
\newcommand{\sg}{\mbox{\rm Sg}}
\renewcommand\>{\rangle}
\newcommand\<{\langle}
\newcommand\QQ{\ring{Q}}
\newcommand\RR{\ring{R}}
\newcommand\ZZ{\ring{Z}}
\newcommand\kk{\Bbbk}
\newcommand\ee{{\mathbf e}}
\newcommand\vv[1]{{\mathbf{#1}}}
\newcommand\supp{\mathrm{supp}}
\newcommand\cone{\mathrm{cone}}
\newcommand\icone{\mathrm{Sg}}
\newcommand\lcm{\mathrm{lcm}}
\newcommand{\modulo}{\,\mathrm{mod}\;}
\newcommand\iso{\cong}
\begin{document}

\title[Sparse solutions of linear Diophantine equations]{Sparse Solutions of Linear Diophantine Equations}

\author{Iskander Aliev}
\address{Cardiff University, UK}
\email{alievi@cardiff.ac.uk}

\author{Jes\'us A. De Loera}
\address{University of California, Davis, USA}
\email{deloera@math.ucdavis.edu}

\author{Timm Oertel}
\address{Cardiff University, UK}
\email{oertelt@cardiff.ac.uk}

\author{Christopher O'Neill}
\address{University of California, Davis, USA}
\email{coneill@math.ucdavis.edu}



\date{\today}

\begin{abstract}
\hspace{-2.05032pt}
We present structural results on solutions to the Diophantine system
$A{\ve y} = {\ve b}$, ${\ve y} \in \ZZ^t_{\ge 0}$
with the smallest number of non-zero entries.  Our tools are algebraic and number theoretic in nature and include Siegel's Lemma, generating functions, and commutative algebra.  These results have some interesting consequences in discrete optimization.
\end{abstract}


\maketitle


\section{Introduction}\label{s:intro}

Let $A$ be an integer $d\times t$ matrix and let ${\ve b}$ be an integer $d$-dimensional vector.
The purpose of this work is to study structural properties of the solutions to the non-linear integer optimization problem
\begin{equation}\label{eq:MainProblem}
\min \{\| {\ve y} \|_0: \, A{\ve y} = {\ve b},  {\ve y} \in \ZZ^t_{\ge 0}\}.%
\end{equation}

Here, $\|\cdot\|_0$ denotes the $0$-norm, which counts the cardinality of the \emph{support} of ${\ve y}$, i.e.\ $\supp({\ve y})=\{i : y_i \not= 0\}$.  In other words, the value of $\|{\ve y} \|_0$ equals the number of non-zero entries in the vector ${\ve y}$.  Problem (\ref{eq:MainProblem}) aims to find the vector of minimal~support.

Before we present our results and state prior work we introduce some basic notation. In this paper, $\log(a)$ refers to the base two logarithm used
to measure bit-size.
Let $A = ({\ve x}_1, \ldots, {\ve x}_t )\in \ZZ^{d\times t}$ be a matrix (defining Problem (\ref{eq:MainProblem})), where the columns come from a finite set of vectors $X=\{{\ve x}_1, \ldots, {\ve x}_t \}$. In what follows
we sometimes use $A$ and $X$ interchangeably.

The \emph{conic hull} of $X$ is the set
\begin{equation*}
\cone(X) = \{\lambda_1 {\ve x}_1+ \cdots +\lambda_t {\ve x}_t:  {\ve x}_1, \ldots, {\ve x}_t \in X, \lambda_1, \ldots, \lambda_t \in \RR_{\ge 0}\},
\end{equation*}
and the \emph{semigroup} of $X$ or the \emph{integer conic hull} of $X$ is the set
\begin{equation*}
\icone(X) = \{\lambda_1 {\ve x}_1+ \cdots +\lambda_t {\ve x}_t: {\ve x}_1, \ldots, {\ve x}_t \in X, \lambda_1, \ldots, \lambda_t \in \ZZ_{\ge 0} \}.
\end{equation*}
For each ${\ve b} \in \icone(X)$, we let
\begin{equation*}
\mathsf P_X({\ve b}) = \{{\ve \lambda} \in \ZZ_{\ge 0}^t : \lambda_1{\ve x}_1 + \cdots + \lambda_t{\ve x}_t = {\ve b}\}
\end{equation*}
denote the solution set for ${\ve b}$.
We are interested in 
 the asymptotic behavior of the function
\begin{equation*}
\mathsf m_0({\ve b}) = \min\{ \| {\ve \lambda} \|_0 : {\ve \lambda} \in \mathsf P_X({\ve b})  \},
\end{equation*}
as well as methods of estimating the function
\begin{equation*}
{\mathsf M}_0(X) = \max\{{\mathsf m}_0({\ve b}): {\ve b} \in \icone(X)\}\,.
\end{equation*}

Finding the sparsest solution of a system of linear equations has many applications and there is a rich literature about this problem.  For real variables, the $0$-norm minimization problem has become quite popular in signal processing through the theory of \emph{compressed sensing}.  It is known that a linear programming relaxation provides a guaranteed approximation \cite{candesetal2006stable, candestao}. Moreover, many nice properties for the size of the solution are known for the case of random matrices \cite{brucksteinetal,candestao}.

In this paper, we consider \emph{integer} solutions, which have various applications as well.  Integer sparsity
appears in the setting of linear codes over finite fields, where the $0$-norm is the \emph{Hamming distance}, and the problem is closely related to the \emph{nearest codeword problem} as well as to the problem of finding shortest cycles on graphs and matroids; see \cite{alonetal,choetal,micciancio1,vardy} and references therein.  Sparse integer solutions also appear in the context of finding guarantees for bin-packing problems via the Gilmore-Gomory formulation \cite{gilmore+gomory}, as first suggested in \cite{karmakar+karp}.  More generally, upper bounds given for the size of the sparsest integer solution indicate that if there exists an optimal solution to such an integer program, then there exists one which is polynomial in the number of equations and the maximum binary encoding length among integers in the objective function vector and the constraint matrix (see \cite[Section~3]{EisenbrandShmonin2006}).  The problem of estimating ${\mathsf M}_0(X)$ goes back to classical results on the integer Carath\'eodory problem.  Cook, Fonlupt, and Schrijver \cite{CookCaratheodory} showed that ${\mathsf M}_0(X) \le 2d-1$ if $C = \cone(X)$ is pointed and $X$ forms a Hilbert basis of $C$.  This result was later improved by Seb\H{o} \cite{SeboCaratheodory} to ${\mathsf M}_0(X)\le 2d-2$. It remains an open question to determine a sharp upper bound in the Hilbert basis setting, in \cite{brunsetal} an example is provided where ${\mathsf M}_0(X)= \lfloor \frac{7}{6} d \rfloor$.
For an arbitrary finite set $X \subset \ZZ^d$, Eisenbrand and Shmonin \cite{EisenbrandShmonin2006} obtained the bound
\begin{equation}\label{eq:ESbound}
{\mathsf M}_0(X) \le 2d \log (4d\| X \|_{\infty}),
\end{equation}
where $\| X \|_{\infty} = \max_{{\ve x} \in X} \| {\ve x} \|_{\infty}$ (recall that for a vector ${\ve x}$,  one defines $\|{\ve x} \|_{\infty}=\max |x_i|$).

For linearly independent vectors ${\ve v}_1, \ldots, {\ve v}_r$ in $\R^t$, the set $\Lambda=\{\sum_{i=1}^{r} z_i {\ve v}_i,\, z_i\in \Z\}$ is a $r$-dimensional {\em lattice} with {\em basis} ${\ve v}_1, \ldots, {\ve v}_r$ and {\em determinant}
$\det(\Lambda)=(\det({\ve v}_i\cdot {\ve v}_j)_{1\le i,j\le r})^{1/2}$, where ${\ve v}_i\cdot {\ve v}_j$ is the standard inner product of the basis vectors ${\ve v}_i$ and  ${\ve v}_j$.
In what follows, we denote $X = \{{\ve x}_1,\ldots, {\ve x}_t\} \subset \ZZ^d$, and let $W=W(X)$ be the $t\times d$ matrix with rows ${\ve x}_1^T, \ldots, {\ve x}_t^T$ and let $r = r(X)$ be the column rank of the matrix $W$. We will denote by $\Lambda(X)$ the $r$-dimensional sublattice of $\ZZ^t$ formed by all integer points in the linear subspace $\spn_{\R}(W)$ spanned by the columns of $W$, that is
\begin{equation}
\Lambda(X)=\spn_{\R}(W)\cap \Z^t\,.
\end{equation}
Similar, we let
\begin{equation*}
H(X) = \det(\Lambda(X))
\end{equation*}
be the determinant of the lattice $\Lambda(X)$. Note that
\begin{equation}\label{SmithExpr}
H(X) = g^{-1}\sqrt{{\rm det}(V^TV)}\,,
\end{equation}
where  $V$ is a matrix formed by any $r$ linearly independent columns of $W$ and $g$ is
the greatest common divisor of the determinants of all submatrices of $V$ of order $r$
(see \cite[Chapter~1, \S 1]{Skolem} and \cite{Smith}).

\subsection{Our contributions}

In this paper, we study two questions:

\begin{enumerate}
\item
What are the best bounds we can give for $\mathsf {\mathsf M}_0(X)$ in terms of the generating set $X$?

Our first main result is Theorem \ref{t:Bound_via_determinant}. There, for general $X$, we obtain two new upper bounds for ${\mathsf M}_0(X)$ that improve upon (\ref{eq:ESbound}) in two distinct ways.

\begin{thm}\label{t:Bound_via_determinant}
Let $X \subset \ZZ^d$ be a finite set of nonzero integer vectors.  Then
\begin{enumerate}[(i)]

\item
\label{t:Bound_via_determinant_bound1}
${\mathsf M}_0(X) \le r(X) + \lfloor\log(H(X))\rfloor$, and

\item
\label{t:Bound_via_determinant_bound2}
${\mathsf M}_0(X) \le \lfloor 2d\log(2\sqrt{d}||X||_{\infty})\rfloor$.

\end{enumerate}
\end{thm}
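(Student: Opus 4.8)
The plan is to prove both bounds by a single mechanism: start from an arbitrary solution $\ve\lambda\in\mathsf P_X(\ve b)$ with large support, and produce a new solution with strictly smaller support as long as the support is ``too large.'' The reduction step is a pigeonhole/counting argument on the lattice points inside the parallelepiped spanned by the columns indexed by the support. Concretely, let $S=\supp(\ve\lambda)$, and consider the columns $\{\ve x_i:i\in S\}$. If $|S|>r(X)$ these columns are linearly dependent over $\QQ$, so there is a nonzero integer vector $\ve u$ supported on $S$ with $\sum_{i\in S}u_i\ve x_i=\0$; adding an appropriate integer multiple of $\ve u$ to $\ve\lambda$ keeps the equation $A\ve y=\ve b$ satisfied, keeps nonnegativity if we move in the right direction and stop at the boundary, and zeroes out at least one coordinate. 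This already gives the trivial bound $\mathsf M_0(X)\le r(X)$... which is false, so the real content is that we cannot always stay nonnegative: the point is to bound how large $|S|$ can be \emph{before} such a sign-controlled move is guaranteed.

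For part (\ref{t:Bound_via_determinant_bound1}), here is the argument I would run. Suppose $\ve\lambda$ has minimal support among solutions for $\ve b$, with $k=|S|=\|\ve\lambda\|_0$. The vectors $\{\ve x_i:i\in S\}$ span a subspace; within $\Lambda(X)$ (or rather its restriction to coordinates $S$) consider the subgroup $L$ of integer vectors supported on $S$ mapping to $\0$ under $A$ — this is a lattice of rank $k-r'$ where $r'=\rank\{\ve x_i:i\in S\}\le r(X)$. Look at the $2^{k}$ sign-restricted sub-sums, or more precisely the images in a suitable quotient torus: the standard trick is to consider the $2^{?}$ points $\sum_{i\in T}\ve e_i$ for $T\subseteq S$ and push them into $\ZZ^S/L'$ for an auxiliary full-rank sublattice, whose index is controlled by $H(X)$ via formula~(\ref{SmithExpr}). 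If $2^{k}$ exceeds that index (times an appropriate factor), two sub-sums collide, their difference is a $\{-1,0,+1\}$-vector $\ve v$ in $L$ with $\sum v_i\ve x_i=\0$, and then one of $\ve\lambda\pm\ve v$ is nonnegative and has smaller support — contradiction. Tracking the index carefully should yield exactly $k\le r(X)+\log H(X)$, and since $k$ is an integer, $k\le r(X)+\lfloor\log H(X)\rfloor$.

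For part (\ref{t:Bound_via_determinant_bound2}), I would run the same collision argument but bound $H(X)$ crudely by Hadamard's inequality: if $\ve x_i\in\ZZ^d$ with $\|\ve x_i\|_\infty\le\|X\|_\infty$, then any $r\times r$ minor of $W$ has absolute value at most $(\sqrt d\,\|X\|_\infty)^{r}$ after noting each column of $V$ (which lives in $\R^r$ after projection, or a row of length $d$) has Euclidean norm $\le\sqrt d\,\|X\|_\infty$; combined with $r\le d$ this gives $H(X)\le(\sqrt d\,\|X\|_\infty)^{d}$, hence by (\ref{t:Bound_via_determinant_bound1}) $\mathsf M_0(X)\le d+d\log(\sqrt d\,\|X\|_\infty)=d\log(2\sqrt d\,\|X\|_\infty)$ — but the stated bound has a leading $2d$, so either the determinant estimate in this regime must be organized differently (bounding $\det(V^TV)$ directly by $(d\|X\|_\infty^2)^{d}$, losing a square) or one invokes Siegel's Lemma as advertised in the abstract to get a short nonzero integer relation directly, with the $2d$ absorbing the geometry of numbers constants. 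I expect the cleanest route is Siegel's Lemma: a homogeneous system with $d$ equations in $k>d$ unknowns and coefficients bounded by $\|X\|_\infty$ has a nonzero integer solution with entries bounded by $(k\|X\|_\infty)^{d/(k-d)}$, and feeding this into a reduction step, then optimizing over $k$, produces the logarithmic bound with the factor $2d$.

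The main obstacle, I expect, is the bookkeeping in part (i): one must choose the auxiliary sublattice of $\ZZ^S$ so that its index is \emph{exactly} $H(X)$ (not merely $O(H(X))$ or $H(X)$ times a dimension-dependent constant), which is why the statement is so clean; getting the collision count to land on $r(X)+\lfloor\log H(X)\rfloor$ rather than something weaker requires identifying the right group-theoretic quotient — presumably $\ZZ^t/(\Lambda(X)^\perp\oplus\cdots)$ or the structure revealed by the Smith normal form in~(\ref{SmithExpr}) — and arguing that the relevant number of lattice cosets meeting a coordinate box of side $2$ is at most $H(X)$. Everything else (the sign-flip to preserve nonnegativity, minimality giving the contradiction, Hadamard/Siegel for part (ii)) is routine once that index computation is pinned down.
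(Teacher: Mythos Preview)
Your overall skeleton---find a nonzero $\{-1,0,1\}$ relation among the generators indexed by the support, then slide along it to kill a coordinate---is exactly right and matches the paper. The gaps are in how you produce that relation and in two bookkeeping points.

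\textbf{Part (i): the relation comes from Siegel's Lemma, not pigeonhole.} The paper does \emph{not} run a collision argument on subset sums; it applies the Bombieri--Vaaler form of Siegel's Lemma directly to the $r\times t$ matrix $V^T$ (with $V$ any $r$ independent columns of $W(X)$). That theorem gives $t-r$ independent integer solutions $\ve y_l$ to $V^T\ve y=\ve 0$ with $\prod_l\|\ve y_l\|_\infty\le g^{-1}\sqrt{\det(V^TV)}=H(X)$; hence the shortest one satisfies $\|\ve y\|_\infty\le H(X)^{1/(t-r)}<2$ as soon as $t>r+\log H(X)$, forcing $\ve y\in\{-1,0,1\}^t$. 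Your pigeonhole sketch (``push the $2^{|S|}$ indicator vectors into $\ZZ^S/L'$ for an auxiliary full-rank sublattice whose index is controlled by $H(X)$'') is precisely the place where you would need to \emph{re-prove} Bombieri--Vaaler: there is no natural finite quotient of size $H(X)$ to collide in, since the kernel lattice has rank $t-r<t$ and the image lattice in $\ZZ^d$ is infinite. Naive counting of subset sums in a box recovers only the Eisenbrand--Shmonin bound (that is where $(2t\|X\|_\infty+1)^d$ comes from). The clean appearance of $H(X)$ genuinely requires the geometry-of-numbers input, which is why the paper invokes Siegel's Lemma for part~(i), not just part~(ii).

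\textbf{A second missing ingredient for (i):} the reduction step is applied to the \emph{subset} $Y=\supp(\ve\lambda)$, so one needs $|Y|>r(Y)+\log H(Y)$, not the global inequality with $X$. The paper proves a separate projection lemma showing $H(Y)\le H(X)$ for every $Y\subset X$ (via Cauchy--Binet and Hermite normal form); this is nontrivial and your sketch does not address it.

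\textbf{Part (ii): your Hadamard bound has the wrong dimension.} The matrix $V$ is $t\times r$ (rows indexed by vectors in $X$, columns taken from $W$), so each column of $V$ lives in $\ZZ^t$ with entries bounded by $\|X\|_\infty$, giving $\|{\rm col}_j\|_2\le\sqrt{t}\,\|X\|_\infty$ and hence $H(Y)\le(\sqrt{|Y|}\,\|X\|_\infty)^d$, \emph{not} $(\sqrt d\,\|X\|_\infty)^d$. Plugging this into $|Y|>r(Y)+\log H(Y)$ yields the \emph{implicit} inequality $|Y|>d+d\log(\sqrt{|Y|}\,\|X\|_\infty)$; solving it (the paper does this by substituting $|Y|>2d\log(2\sqrt d\,\|X\|_\infty)$ and checking) is exactly what produces the factor $2d$. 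Your mystery constant is not a geometry-of-numbers artifact---it comes from the self-referential $\sqrt{|Y|}$ in the Hadamard estimate.
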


It should be pointed out here that the bound in Theorem~\ref{t:Bound_via_determinant} (i) depends only on the rank and $H(X)$, the height of a rational subspace $S$ spanned by $W(X)$ and remains the same for any finite set of nonzero integer vectors $X'$ with $S=\spn_{\R}(W(X'))$.  Hence the bound in Theorem~\ref{t:Bound_via_determinant} (i) becomes arbitrarily smaller than the bound (\ref{eq:ESbound}) provided $\|X\|_{\infty}$ is sufficiently large. 

The second result is Theorem~\ref{t:Lemma_Positive_Knapsack_refinement} below, which refines Theorem~\ref{t:Bound_via_determinant} for knapsack problems with positive entries (that is, when $X \subset \ZZ_{> 0}$ is a list of positive integers).

\begin{thm}\label{t:Lemma_Positive_Knapsack_refinement}
Let $X\subset \Z_{>0}$ be a finite set of positive integers. Then
\begin{equation}\label{eq:Positive_Knapsack_Bound_via_determinant}
{\mathsf M}_0(X) \leq 1 + \lfloor \log(\|X\|_{\infty})\rfloor\,.
\end{equation}
\end{thm}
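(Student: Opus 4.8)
The plan is to prove that for any finite set $X \subset \ZZ_{>0}$ of positive integers and any $b \in \icone(X)$, there is a representation $b = \sum \lambda_i x_i$ with at most $1 + \lfloor \log(\|X\|_\infty)\rfloor$ nonzero $\lambda_i$. The key idea is a greedy/averaging argument exploiting positivity: among any two generators $x_i < x_j$ used in a representation, one can partially substitute copies of the larger for the smaller (or vice versa) to reduce the support, and iterate. I would start from an optimal-support solution $\ve\lambda \in \mathsf P_X(b)$ with $k = \mathsf m_0(b)$ nonzero entries, supported on generators $x_{i_1} < x_{i_2} < \cdots < x_{i_k}$ with multiplicities $\lambda_{i_1}, \ldots, \lambda_{i_k} \ge 1$.

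\textbf{Key steps.} First I would set up a minimality assumption: among all solutions achieving support size $k$, pick one, say, minimizing $\sum_j \lambda_{i_j}$ (the total number of ``coins''), or minimizing $\lambda_{i_k}$ lexicographically — the precise choice is part of the tuning. Next, the core reduction: I claim that in such a minimal solution, for consecutive used generators we must have $\lambda_{i_j} \, x_{i_j} < x_{i_{j+1}}$ for $j = 1, \ldots, k-1$; otherwise, if $\lambda_{i_j} x_{i_j} \ge x_{i_{j+1}}$, one could trade $x_{i_{j+1}}$ copies of $x_{i_j}$ worth of value — more precisely, remove $x_{i_{j+1}}$ units of $x_{i_j}$ (i.e.\ decrease $\lambda_{i_j}$ by $x_{i_{j+1}}/\gcd$ appropriately) and add $x_{i_j}$-many... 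I will instead argue via a cleaner exchange: if $\lambda_{i_j} x_{i_j} \ge x_{i_{j+1}}$ then we can write $\mu x_{i_{j+1}} + \nu x_{i_j}$ for the same partial value with $\nu < \lambda_{i_j}$, contradicting minimality unless the support genuinely drops. Granting the inequality $\lambda_{i_j} x_{i_j} < x_{i_{j+1}}$ for all $j < k$, a telescoping/product bound forces $x_{i_k} > \prod_{j=1}^{k-1}$ something, but more directly: $b \ge \lambda_{i_k} x_{i_k} \ge x_{i_k} > 2^{k-1}$ after showing each step at least doubles, which gives $k - 1 \le \log(x_{i_k}) \le \log(\|X\|_\infty)$, hence $k \le 1 + \lfloor\log(\|X\|_\infty)\rfloor$.

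\textbf{Where the doubling comes from.} The crucial sub-lemma is that in the minimal solution, $x_{i_{j+1}} \ge 2 \lambda_{i_j} x_{i_j} \ge 2 x_{i_{j-1}}$-type estimates hold, or at minimum $x_{i_{j+1}} > x_{i_j} + (\text{something} \ge x_{i_j})$, so that the sequence of used generators grows at least geometrically with ratio $2$; then $\|X\|_\infty \ge x_{i_k} \ge 2^{k-1} x_{i_1} \ge 2^{k-1}$. I would establish this by the exchange argument above: if two consecutive multiples overlap in the sense $\lambda_{i_j} x_{i_j} \ge x_{i_{j+1}}$, we can strictly reduce the total multiplicity or the support, contradiction; and if $\lambda_{i_j} = 1$ with $x_{i_j} < x_{i_{j+1}} < 2 x_{i_j}$, then $x_{i_{j+1}} - x_{i_j}$ is a positive value smaller than $x_{i_j}$ that could be absorbed differently — this is the delicate case.

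\textbf{Main obstacle.} The hard part will be pinning down exactly which minimality condition on $\ve\lambda$ makes every exchange step go through without circularity, and handling the boundary case where an exchange keeps the support size constant but could loop. I expect the clean fix is a potential-function argument (e.g.\ minimize $\|\ve\lambda\|_0$ first, then $\sum_j \lambda_{i_j}$, then the multiset $\{x_{i_j}\}$ in some order) so that every exchange strictly decreases the potential, guaranteeing termination at a configuration where the geometric-growth inequalities hold; once those hold, the logarithmic bound on $k$ is immediate from $x_{i_k} \le \|X\|_\infty$. An alternative route, if the exchange bookkeeping proves too fiddly, is to invoke Theorem~\ref{t:Bound_via_determinant}(i) in the one-row case $d = 1$: there $W(X)$ is a single column, $r(X) = 1$, and $H(X) = \gcd(X)$, giving ${\mathsf M}_0(X) \le 1 + \lfloor \log(\gcd(X))\rfloor$ — but since $\gcd(X)$ can be as large as $\|X\|_\infty$ only when all entries share that factor, this does not directly yield the stated bound, so the self-contained exchange argument is the one I would pursue.
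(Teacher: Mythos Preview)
Your pairwise-exchange route has a genuine gap, precisely at the ``delicate case'' you flagged. The doubling chain $x_{i_{j+1}}\ge 2x_{i_j}$ simply need not hold in a minimal solution, under \emph{any} potential function, because the minimal solution can be unique. Take $X=\{4,6,9\}$ and $b=19$: the only element of $\mathsf P_X(19)$ is $(1,1,1)^T$, so $\mathsf m_0(19)=3$, yet $6<2\cdot 4$ and $9<2\cdot 6$. Your proposed inequalities $\lambda_{i_j}x_{i_j}<x_{i_{j+1}}$ do hold here (both multiplicities are $1$), but they do not force doubling, and your suggested fix (``$x_{i_{j+1}}-x_{i_j}$ could be absorbed differently'') fails since $6-4=2$ and $9-6=3$ are not in $\icone(X)$ in any useful way. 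No refinement of the potential can help when there is nothing to exchange.

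The paper's proof is structurally different: it reduces to a lemma saying that if $|X|>1+\log(\|X\|_\infty)$ then every $b\in\icone(X)$ already lies in $\icone(Y)$ for some proper $Y\subset X$. The engine is a \emph{global} short-vector argument rather than a pairwise one: with $x_t=\|X\|_\infty$, one applies Minkowski's first theorem to the lattice $\Gamma=\{y\in\ZZ^{t-1}:\sum_{i<t} y_ix_i\equiv 0\ (\mathrm{mod}\ x_t)\}$, whose determinant is at most $x_t<2^{t-1}$, to obtain a nonzero $u\in\Gamma\cap\{-1,0,1\}^{t-1}$; lifting gives a kernel vector $p$ of $W(X)^T$ with first $t-1$ entries in $\{-1,0,1\}$, and translating a positive solution along $\ZZ p$ kills a coordinate. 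Equivalently, by pigeonhole the $2^{t-1}$ subset sums of $x_1,\dots,x_{t-1}$ cannot be distinct modulo $x_t$. This is exactly the information your pairwise exchanges cannot see: in the example above the relevant relation is $4+6+9=19$ versus, say, $4-6+\,?$, and the useful kernel vector involves all three generators at once. Your alternative route via Theorem~\ref{t:Bound_via_determinant}(i) is also off: for $d=1$ one has $H(X)=\gcd(X)^{-1}\sqrt{\sum_i x_i^2}$, not $\gcd(X)$, so that bound can exceed $1+\lfloor\log\|X\|_\infty\rfloor$.
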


Both theorems are proved in Section~\ref{s:caratheodory}.

\item
What is the asymptotic behavior of the univariate function $ \mathsf m_0(\lambda {\ve b})$ obtained from successive dilations of the vector ${\ve b}$?

We prove the somewhat surprising result that this function of $\lambda$ is eventually a \emph{periodic} function.  The precise statement (Theorem~\ref{t:eventualm0affine}) is reminiscent of the behavior of Ehrhart and volume functions of polyhedra; see \cite{BarviPom} for more on polyhedral combinatorics.
We also present Theorem~\ref{t:eventualm0knap}, which is a much sharper result for knapsacks problems.  The details of these results are discussed in Section~\ref{s:multiquasi}.

\end{enumerate}

In order to stress the applicability of our results in discrete optimization, we conclude this introduction with the following immediate corollary of Theorem~\ref{t:Bound_via_determinant}, which gives an interesting bound on the sparsity of optimal solutions in integer and mixed integer linear programs.

\begin{cor}\label{c:Bound_via_determinant}
Fix a matrix $A \in \Z^{d \times t}$ with columns ${\ve a}_1, \ldots, {\ve a}_t$ and objective function ${\ve c} \in \Z^t$.  If the integer program
$$\min \{{\ve c}^T {\ve x} :  A{\ve x}={\ve b}, \ {\ve x}\geq 0, \  {\ve x}\in \Z^t\}$$
has a finite optimum, then there exists an optimal solution ${\ve x}^*$ with
at most
$$(d+1) + \lfloor\log(H(X))\rfloor$$
non-zero components, where $X$ is given by the enlarged column vectors
$$\left\{\left(\genfrac{}{}{0pt}{}{{\ve a}_1}{c_1} \right), \ldots, \left(\genfrac{}{}{0pt}{}{{\ve a}_1}{c_1}\right)\right\}.$$

More generally, if  $A$ and $B$ are $d \times t_1$, $d \times t_2$ matrices, and an optimum exists for the mixed integer program
$$\min \{{\ve c}^T {\ve x} + {\ve v}^T {\ve y} :  A{\ve x} + B {\ve y} = {\ve b}, \ {\ve x}\geq 0, \ {\ve y}\ge0, \  {\ve x}\in \Z^{t_1}, \ {\ve y} \in \R^{t_2} \},$$
then there is an optimal solution
with at most
$$(d + 1) + \lfloor\log(H(X))\rfloor + \rank(B)$$
non-zero components, where $X$ is as before.
\end{cor}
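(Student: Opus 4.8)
The plan is to reduce the (mixed) integer program to a feasibility problem of the kind handled by Theorem~\ref{t:Bound_via_determinant}~(i), after augmenting the constraint matrix with the objective row. First I would treat the pure integer case. Since the program has a finite optimum, there is an optimal value $\gamma = {\ve c}^T{\ve x}^*$ attained at some feasible ${\ve x}^*$. Consider the augmented system in $\ZZ^{d+1}$: the enlarged columns $\left(\genfrac{}{}{0pt}{}{{\ve a}_i}{c_i}\right)$ together with the enlarged right-hand side $\left(\genfrac{}{}{0pt}{}{{\ve b}}{\gamma}\right)$. Any nonnegative integer solution ${\ve x}$ to this augmented system satisfies both $A{\ve x} = {\ve b}$ and ${\ve c}^T{\ve x} = \gamma$, hence is an \emph{optimal} solution of the original program; conversely ${\ve x}^*$ itself is such a solution, so the augmented right-hand side lies in $\icone(X)$ where $X = \{\left(\genfrac{}{}{0pt}{}{{\ve a}_1}{c_1}\right),\ldots,\left(\genfrac{}{}{0pt}{}{{\ve a}_t}{c_t}\right)\}$. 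Applying Theorem~\ref{t:Bound_via_determinant}~(i) to this $X \subset \ZZ^{d+1}$ yields an optimal solution with at most $r(X) + \lfloor\log(H(X))\rfloor \le (d+1) + \lfloor\log(H(X))\rfloor$ nonzero components, using $r(X) \le d+1$.

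For the mixed case I would first fix the integer part. Let $({\ve x}^*,{\ve y}^*)$ be an optimal solution; freeze ${\ve x} = {\ve x}^*$ and consider the residual linear program in ${\ve y}$ alone, namely $\min\{{\ve v}^T{\ve y} : B{\ve y} = {\ve b} - A{\ve x}^*,\ {\ve y}\ge 0\}$. This LP has an optimal value equal to ${\ve v}^T{\ve y}^*$ (any better ${\ve y}$ would contradict optimality of $({\ve x}^*,{\ve y}^*)$), and by standard LP theory it is attained at a basic feasible solution $\tilde{\ve y}$, which has at most $\rank(B)$ nonzero components. Then $({\ve x}^*,\tilde{\ve y})$ is again optimal for the mixed program. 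Now I would apply the pure-integer argument above to the vector ${\ve x}^*$: it is an optimal integer solution of $\min\{{\ve c}^T{\ve x} : A{\ve x} = {\ve b} - B\tilde{\ve y},\ {\ve x}\ge 0,\ {\ve x}\in\ZZ^{t_1}\}$ (with objective value ${\ve c}^T{\ve x}^*$), so that problem admits an optimal ${\ve x}^{**}$ with at most $(d+1)+\lfloor\log(H(X))\rfloor$ nonzero entries, and $({\ve x}^{**},\tilde{\ve y})$ is optimal for the mixed program with at most $(d+1)+\lfloor\log(H(X))\rfloor + \rank(B)$ nonzero components in total.

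The only subtlety I anticipate is bookkeeping the value of $H(X)$: the argument runs with the \emph{same} set $X$ of enlarged $({\ve a}_i,c_i)$-columns throughout (the residual right-hand side $\left(\genfrac{}{}{0pt}{}{{\ve b}-B\tilde{\ve y}}{{\ve c}^T{\ve x}^*}\right)$ still lies in $\icone(X)$), so there is no drift in $H(X)$ — the bound in Theorem~\ref{t:Bound_via_determinant}~(i) depends only on $\spn_{\R}(W(X))$, which does not change when the right-hand side changes. One should also note that $\left(\genfrac{}{}{0pt}{}{{\ve b}-B\tilde{\ve y}}{{\ve c}^T{\ve x}^*}\right)$ need not be an integer vector if $\tilde{\ve y}$ is irrational, but since ${\ve x}^*$ is a \emph{fixed integer} optimal solution we may instead run the pure-integer step first (freezing ${\ve y}={\ve y}^*$) and the LP-reduction step second, so that the integer Carathéodory bound is only ever applied with an integer right-hand side $\left(\genfrac{}{}{0pt}{}{{\ve b}-B{\ve y}^*}{{\ve c}^T{\ve x}^*}\right)$ — wait, this is still not integral; the clean fix is to observe that we only need ${\ve x}^*$ to be optimal for $A{\ve x}={\ve b}-B{\ve y}^*$ among \emph{integer} points, apply Theorem~\ref{t:Bound_via_determinant}~(i) to get a sparse integer ${\ve x}^{**}$ with $A{\ve x}^{**}=A{\ve x}^*$ and ${\ve c}^T{\ve x}^{**}={\ve c}^T{\ve x}^*$ (so the right-hand side we feed to the theorem is the integer vector $\left(\genfrac{}{}{0pt}{}{A{\ve x}^*}{{\ve c}^T{\ve x}^*}\right)$), and only then replace ${\ve y}^*$ by a basic ${\ve y}$. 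This ordering keeps every application of the integer result on integer data, and is the main point to get right in writing up the proof.
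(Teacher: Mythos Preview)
Your approach is exactly the intended one: the paper calls this an ``immediate corollary'' of Theorem~\ref{t:Bound_via_determinant} and gives no further proof, and the natural way to make it immediate is precisely to append the objective row, observe that $\bigl(\begin{smallmatrix}A{\ve x}^*\\ {\ve c}^T{\ve x}^*\end{smallmatrix}\bigr)\in\icone(X)$, apply Theorem~\ref{t:Bound_via_determinant}(i), and for the mixed case combine this with the standard fact that a bounded LP over $\{\ve y\ge 0: B{\ve y}=\ve b'\}$ attains its optimum at a vertex, where at most $\rank(B)$ coordinates are nonzero.

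Your final ordering is the right one and your self-correction in the last paragraph lands on the clean argument: feed Theorem~\ref{t:Bound_via_determinant}(i) the integer vector $\bigl(\begin{smallmatrix}A{\ve x}^*\\ {\ve c}^T{\ve x}^*\end{smallmatrix}\bigr)$ to obtain a sparse ${\ve x}^{**}$ with $A{\ve x}^{**}=A{\ve x}^*$ and ${\ve c}^T{\ve x}^{**}={\ve c}^T{\ve x}^*$, note $({\ve x}^{**},{\ve y}^*)$ is still optimal, and only then replace ${\ve y}^*$ by a basic optimal solution of the residual LP (which is bounded because the MIP is). In a write-up I would present just this order and drop the exploratory back-and-forth; you might also add a one-line remark that any zero column $\bigl(\begin{smallmatrix}{\ve a}_i\\ c_i\end{smallmatrix}\bigr)={\ve 0}$ can be discarded, so the ``nonzero vectors'' hypothesis of Theorem~\ref{t:Bound_via_determinant} is harmless.
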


\section{Bounds for sparsity in solutions through Siegel's Lemma}
\label{s:caratheodory}

The proof of \eqref{eq:ESbound}, and some of the proofs in \cite{SeboCaratheodory}, make use of equal sub-sums of the set of vectors $X$ to decrease the number of elements needed to represent a given vector ${\ve b}\in \sg(X)$.  In this section, we use Siegel's Lemma and the geometry of numbers to refine \eqref{eq:ESbound}. We now review some useful results.

\subsection{An application of Siegel's Lemma}%

Given an integer matrix $A \in \ZZ^{m \times n}$, $m < n$, with $m$ linearly independent rows, the system $A{\ve y} = {\ve 0}$ has a nontrivial integer solution.  If the coefficients are small integers, then there will be a solution in small integers.  Thue was the first to use this principle in \cite{Th}, Siegel was the first to state this idea formally (\cite{Siegel}, Bd. I, p. 213, Hilfssatz).   Bombieri and Vaaler \cite{BombVaal} obtained the following general~result.

\begin{thm}[Bombieri and Vaaler, 1983]\label{t:SiegelsLemma}
Fix an $m \times n$ integer matrix $A$, $m < n$, with $m$ linearly independent rows.  To the system of equations $A{\ve y} = {\ve 0}$, there are $n - m$ linearly independent integral solutions ${\ve y}_1, \ldots,{\ve y}_{n-m} $ satisfying
\begin{eqnarray}\label{eq:sl_sl_f}
\prod_{l=1}^{n-m} || {\ve y}_l||_{\infty} \le g^{-1}\sqrt{{\rm det}(AA^{T})}\,,
\end{eqnarray}
where $g$ is the greatest common divisor of the determinants of all $m \times m$ submatrices of~$A$.
\end{thm}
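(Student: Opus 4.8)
The plan is to recast the assertion as a statement about successive minima of a convex body relative to a lattice, and then combine Minkowski's second theorem with Vaaler's inequality on central sections of cubes. Write $S=\{{\ve y}\in\R^n:A{\ve y}={\ve 0}\}$; since the rows of $A$ are linearly independent, $S$ is a rational subspace of $\R^n$ of dimension $k:=n-m$, and $\Lambda:=S\cap\ZZ^n$ is a lattice of rank $k$ whose elements are exactly the integral solutions of $A{\ve y}={\ve 0}$. The first step is the determinant identity $\det(\Lambda)=g^{-1}\sqrt{\det(AA^{T})}$. This follows from \eqref{SmithExpr} applied with $W=A^{T}$, which gives $\det\bigl(\spn_{\R}(A^{T})\cap\ZZ^n\bigr)=g^{-1}\sqrt{\det(AA^{T})}$, together with the classical fact from the geometry of numbers that a primitive sublattice of $\ZZ^n$ and its orthogonal complement have the same determinant; here the orthogonal complement of $\spn_{\R}(A^{T})$ (the row space of $A$) is precisely $S$, so $\det(\Lambda)=\det\bigl(\spn_{\R}(A^{T})\cap\ZZ^n\bigr)$.

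Next, introduce the symmetric convex body $C=\{{\ve x}\in S:\|{\ve x}\|_{\infty}\le 1\}=[-1,1]^n\cap S$. Since $\|{\ve x}\|_{\infty}\le 1$ forces $\|{\ve x}\|_2\le\sqrt n$, the body $C$ is bounded in $S$, and it clearly contains a neighborhood of the origin in $S$, so the successive minima $0<\lambda_1\le\cdots\le\lambda_k$ of $C$ with respect to $\Lambda$ are finite and are attained by linearly independent vectors ${\ve y}_1,\ldots,{\ve y}_k\in\Lambda$ with $\|{\ve y}_l\|_{\infty}\le\lambda_l$ for each $l$. Working in the $k$-dimensional Euclidean space $S$, Minkowski's second theorem gives
\[
\lambda_1\cdots\lambda_k\ \le\ \frac{2^{k}\,\det(\Lambda)}{\mathrm{vol}_k(C)},
\]
where $\mathrm{vol}_k$ denotes $k$-dimensional volume in $S$.

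The decisive ingredient --- and the step I expect to be the main obstacle --- is a lower bound for $\mathrm{vol}_k(C)$. The obvious estimate, bounding $C$ below by the inscribed Euclidean ball of radius $1$, is hopeless, since the volume of a $k$-dimensional unit ball tends to $0$ as $k$ grows while we need $\mathrm{vol}_k(C)$ comparable to $2^{k}$. The right tool is Vaaler's cube-slicing inequality: every central section of $[-1,1]^n$ by a $k$-dimensional subspace has volume at least $2^{k}$, so $\mathrm{vol}_k(C)\ge 2^{k}$. It is precisely to sidestep this inequality that Bombieri and Vaaler in \cite{BombVaal} instead argue via an adelic form of Minkowski's theorem, which has the added benefit of yielding the analogous bound over an arbitrary number field. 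Granting $\mathrm{vol}_k(C)\ge 2^{k}$, the displayed inequality becomes $\lambda_1\cdots\lambda_k\le\det(\Lambda)=g^{-1}\sqrt{\det(AA^{T})}$, and therefore the linearly independent integral solutions ${\ve y}_1,\ldots,{\ve y}_{n-m}$ satisfy
\[
\prod_{l=1}^{n-m}\|{\ve y}_l\|_{\infty}\ \le\ \prod_{l=1}^{n-m}\lambda_l\ \le\ g^{-1}\sqrt{\det(AA^{T})},
\]
which is \eqref{eq:sl_sl_f}. I would close by remarking that only the upper half of Minkowski's second theorem enters, and that passing to the product of the $\lambda_l$ is essential: Minkowski's first theorem alone would bound just the single shortest solution.
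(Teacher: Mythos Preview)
The paper does not supply its own proof of this statement; Theorem~\ref{t:SiegelsLemma} is quoted from Bombieri and Vaaler \cite{BombVaal} and used as a black box in the proof of Lemma~\ref{t:Lemma_ES_refinement}. There is therefore nothing in the paper to compare your argument against.

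That said, your outline is a correct derivation of \eqref{eq:sl_sl_f}. The determinant identity $\det(\Lambda)=g^{-1}\sqrt{\det(AA^{T})}$ follows just as you describe: applying \eqref{SmithExpr} with $W=A^{T}$ (so that $V=A^{T}$ itself, since all $m$ columns are independent) gives the determinant of the primitive lattice in the row space of $A$, and the classical duality $\det(L)=\det(L^{\perp})$ for primitive sublattices $L\subset\ZZ^n$ transfers this to the kernel lattice. The combination of Minkowski's second theorem with Vaaler's lower bound $\mathrm{vol}_k([-1,1]^n\cap S)\ge 2^{k}$ for central sections of the cube then gives the product bound exactly as you write.

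One small historical correction: the cube-slicing inequality is not something Bombieri and Vaaler \emph{sidestep}; it is Vaaler's own earlier result and is precisely the analytic input underlying \cite{BombVaal} as well. The adelic formulation of Minkowski's theorem there serves to carry the argument over to an arbitrary number field, not to avoid the slicing estimate over~$\QQ$.
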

Note that (\ref{eq:sl_sl_f}) is optimal up to a constant multiple depending only on $n$ and $m$~\cite{VaalerBest}.
Siegel's Lemma plays the key role in the proof of the following result.

\begin{lemma}\label{t:Lemma_ES_refinement}
Let $X \subset \ZZ^d$ be a finite set of nonzero integer vectors and let ${\ve b} \in \icone(X)$. If
\begin{equation}\label{eq:Bound_via_determinant}
|X| > r(X) + \log(H(X))\,
\end{equation}
then there exists a proper subset $Y\subset X$ such that ${\ve b} \in \icone(Y)$.
\end{lemma}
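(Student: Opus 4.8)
The plan is to use the Bombieri--Vaaler version of Siegel's Lemma (Theorem~\ref{t:SiegelsLemma}) to produce a small integer relation among the columns of $X$, and then to use that relation to ``trade away'' one of the generators appearing in a fixed representation of $\ve b$. Concretely, write $n = |X|$ and let $W = W(X)$ be the $n \times d$ matrix whose rows are the $\ve x_i^T$, with column rank $r = r(X)$. The lattice of integer relations among the columns of $A = W^T$ is precisely the orthogonal complement $\Lambda(X)^\perp \cap \Z^n$, which has rank $n - r$. I would apply Siegel's Lemma to an $(n-r) \times n$ integer matrix whose rows span this relation lattice (after selecting a full-rank subsystem of the equations $A \ve y = \ve 0$, i.e. discarding redundant rows of $A$ so that the remaining $r$ rows are linearly independent); this yields $n - r$ linearly independent integer relations $\ve y_1, \dots, \ve y_{n-r}$ with $\prod_l \|\ve y_l\|_\infty$ bounded by the relevant determinant, which by \eqref{SmithExpr} equals $H(X)$ (up to the $g^{-1}$ normalization already built into that formula).

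The key combinatorial step: suppose for contradiction that $\ve b \notin \icone(Y)$ for every proper subset $Y \subsetneq X$, i.e. every representation $\ve \lambda \in \mathsf P_X(\ve b)$ has full support $\{1,\dots,n\}$, so $\ve\lambda > 0$ componentwise. Fix one such $\ve\lambda$. For each relation $\ve y_l$, at least one of $\ve\lambda + \ve y_l$ or $\ve\lambda - \ve y_l$ fails to be nonnegative (otherwise we could find two distinct nonnegative representations and subtract, but more to the point we want to force support reduction). The standard move is: since $\ve y_l \neq \ve 0$, following the ray $\ve\lambda + s\ve y_l$ in the appropriate direction until we first hit the boundary of the nonnegative orthant produces a nonnegative vector with strictly smaller support --- but this vector need not be integral. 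To fix integrality, I would instead argue as follows: because the $\ve y_l$ are \emph{linearly independent} and there are $n - r$ of them, they span the full relation space $\Lambda(X)^\perp_\R$; hence any two representations of $\ve b$ differ by an integer combination of the $\ve y_l$. Now, starting from $\ve\lambda$, I want to show one can reach a nonnegative \emph{integer} representation with a zero coordinate, provided the $\ve y_l$ are short enough relative to the size of $\ve\lambda$.

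The mechanism for that is a pigeonhole / averaging argument on the "box" of representations. Consider the finite set of lattice points $\ve\lambda + \sum_l c_l \ve y_l$ with $c_l \in \{0, 1\}$, a set of $2^{n-r}$ points. If $\ve\lambda$ is large in every coordinate (larger than $\sum_l \|\ve y_l\|_\infty$, say) these are all nonnegative, which is not yet a contradiction; so instead one iterates. The real argument, following Eisenbrand--Shmonin's strategy but sharpened, is: among the $2^{n-r}$ sign-pattern combinations $\ve\lambda + \sum_l \varepsilon_l \ve y_l$ with $\varepsilon_l \in \{0,1\}$, two of them, say with sign patterns $\varepsilon$ and $\varepsilon'$, must agree modulo $2$ in... no --- better to use the true bound: since $\prod_l \|\ve y_l\|_\infty \le H(X) < 2^{\,n - r}$ by hypothesis \eqref{eq:Bound_via_determinant} (which says $n > r + \log H(X)$, i.e. $2^{n-r} > H(X) \ge \prod \|\ve y_l\|_\infty$), there must exist an index $l$ with $\|\ve y_l\|_\infty = 0$... that's false too since they're nonzero; rather $\prod_l \|\ve y_l\|_\infty < 2^{n-r}$ forces at least one $\|\ve y_l\|_\infty < 2$ only if... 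Let me restate: $\prod_{l=1}^{n-r} \|\ve y_l\|_\infty \le H(X)$, and $H(X) < 2^{\,|X| - r} = 2^{\,n-r}$, so the geometric mean of the $\|\ve y_l\|_\infty$ is less than $2$; since each is a positive integer $\ge 1$, at least one $\ve y_l$ must have $\|\ve y_l\|_\infty = 1$. That single relation $\ve y = \ve y_l$ with entries in $\{-1,0,1\}$ is the tool: walking $\ve\lambda, \ve\lambda \pm \ve y, \ve\lambda \pm 2\ve y, \dots$ in the direction that decreases the smallest positive coordinate of $\ve\lambda$ among the support of $\ve y$, and stopping the first time a coordinate hits $0$, keeps the vector integral (unit steps!) and nonnegative, and strictly reduces the support --- contradicting that every representation of $\ve b$ has full support.

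I expect the main obstacle to be the bookkeeping around applying Siegel's Lemma in the right coordinates: one must pass from the full system $A\ve y = \ve 0$ (with possibly dependent rows) to an $r \times n$ full-rank subsystem, verify that its integer kernel is still all of $\Lambda(X)^\perp \cap \Z^n$, and check that the determinant appearing on the right of \eqref{eq:sl_sl_f} for this subsystem is exactly $H(X)$ as given by \eqref{SmithExpr}; the gcd factor $g$ must be tracked carefully so that the two normalizations match. The subsequent combinatorial descent --- using a $\{-1,0,1\}$ relation to reduce support while staying in $\Z^n_{\ge 0}$ --- is routine once the short relation is in hand, and the contrapositive formulation ("if $|X| > r + \log H(X)$ then some proper $Y$ works") is exactly what \eqref{eq:Bound_via_determinant} delivers via $2^{n-r} > H(X)$.
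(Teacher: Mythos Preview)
Your proposal is correct and, once you settle on the final argument, matches the paper's proof essentially line for line: apply Bombieri--Vaaler to a full-rank $r\times t$ subsystem of $W(X)^T\ve y=\ve 0$, use $H(X)<2^{t-r}$ to force some relation $\ve y$ with $\|\ve y\|_\infty=1$, then subtract a suitable multiple of $\ve y$ from a strictly positive $\ve\lambda$ to zero out a coordinate. The paper's presentation is simply more direct---it jumps straight to $\ve\lambda - (\min_{i:y_i\neq 0}\lambda_i)\ve y$ rather than ``walking'' in unit steps, and it records the determinant identification with $H(X)$ via \eqref{SmithExpr} up front---but the mathematical content is identical, including the bookkeeping concern you flag about matching the $g^{-1}\sqrt{\det(V^TV)}$ normalization.
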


\begin{proof}[Proof of Lemma~\ref{t:Lemma_ES_refinement}]
We will use Theorem~\ref{t:SiegelsLemma}.  Suppose that
\begin{equation}\label{eq:opposite_inequality}
|X| = t > r(X) + \log(H(X))\,.
\end{equation}
We need to show that there exists a proper subset $Y$ of $X$
such that ${\ve b} \in \icone(Y)$.

The inequality (\ref{eq:opposite_inequality}) implies
\begin{equation*}
H(X)<2^{t-r}\,,
\end{equation*}
where $r=r(X)$.
Let $V$ be a matrix formed by $r$ linearly independent columns of the matrix $W(X)$. Then $H(X) = g^{-1}\sqrt{{\rm det}(V^TV)}$, where $g$ is the greatest common divisor of the determinants of all submatrices of $V$ of order $r$.  Applying Theorem~\ref{t:SiegelsLemma} to the matrix $V^T$, there exists a nonzero vector ${\ve y} \in \ZZ^t$ such that
\begin{equation}\label{InKernel}
V^T{\ve y}={\ve 0}
\end{equation}
and
%
\begin{equation*}||{\ve y}||_\infty \le H(X)^{1/(t-r)} < 2\,,
\end{equation*}
%
so ${\ve y} = (y_1, \ldots, y_t)^T$ has entries $y_i \in \{-1,0,1\}$.
By (\ref{InKernel}) we also have $W(X)^T{\ve y}={\ve 0}$, so that
\begin{equation*}
y_1 {\ve x}_1 + \cdots + y_t {\ve x}_t = {\ve 0}.
\end{equation*}

We will show that ${\ve b} = \mu_1 {\ve x}_1 + \cdots + \mu_t {\ve x}_t$, $\mu_i \in \ZZ_{\ge 0}$ with $\mu_i = 0$ for at least one $i$.  Indeed, this means there exists a proper subset $Y$ of $X$ such that ${\ve b} \in \icone(Y)$.

Suppose that ${\ve b} = \lambda_1 {\ve x}_1 + \cdots + \lambda_t {\ve x}_t$, $\lambda_i \in \ZZ_{> 0}$.  Writing $\lambda = \min_{i: y_i\neq 0} \lambda_i$ and replacing, if necessary, the vector ${\ve y}$ by $-{\ve y}$, we have
\begin{equation*}
{\ve b} = \lambda_1 {\ve x}_1 + \cdots + \lambda_t {\ve x}_t - \lambda(y_1 {\ve x}_1 + \cdots + y_t {\ve x}_t)
= \sum_{i=1}^t (\lambda_i-\lambda y_i){\ve x}_i\,,
\end{equation*}
where all coefficients $\mu_i:=\lambda_i-\lambda y_i$ are nonnegative and at least one of them is zero, as desired.
\end{proof}

The second ingredient for the proof of Theorem \ref{t:Bound_via_determinant} is the following technical result.

\begin{lemma} \label{projection_lemma} Let $X \subset \ZZ^d$ be a finite set of nonzero integer vectors and $Y$ be a subset of $X$. Then $H(Y)\le H(X)$.
\end{lemma}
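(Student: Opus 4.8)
The plan is to realise $Y$ as a sublist of $X$ and to push the lattice $\Lambda(X)\subseteq\ZZ^t$ down to $\ZZ^s$, where $s=|Y|\le t=|X|$, by a coordinate projection. After relabelling the elements of $X$ we may assume $Y=\{{\ve x}_1,\dots,{\ve x}_s\}$, and we let $\pi\colon\R^t\to\R^s$ be the projection onto the first $s$ coordinates. The argument rests on two observations: $\pi$ sends the $j$th column of $W(X)$ to the $j$th column of $W(Y)$, so $\pi\bigl(\spn_\R(W(X))\bigr)=\spn_\R(W(Y))$; and $\pi$ is a weak contraction, $\|\pi({\ve v})\|\le\|{\ve v}\|$ for all ${\ve v}$. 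Since also $\pi(\ZZ^t)=\ZZ^s$, the first observation gives $\pi(\Lambda(X))\subseteq\spn_\R(W(Y))\cap\ZZ^s=\Lambda(Y)$; and because $\Lambda(X)$ spans the rational subspace $\spn_\R(W(X))$, its image spans $\spn_\R(W(Y))$, so $\pi(\Lambda(X))$ is a sublattice of $\Lambda(Y)$ of full rank $r(Y)$. A full-rank sublattice has covolume at least that of the ambient lattice, so $H(Y)=\det\Lambda(Y)\le\det\pi(\Lambda(X))$, and the lemma is reduced to the single inequality $\det\pi(\Lambda(X))\le\det\Lambda(X)=H(X)$.

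To establish that inequality I would set $V=\spn_\R(W(X))$, $K=V\cap\ker\pi$, and $W=V\cap K^\perp$. As $V$ and $\ker\pi$ are rational subspaces, $K$ is rational, so $L_0:=\Lambda(X)\cap K=K\cap\ZZ^t$ is a lattice that spans $K$; that is, $L_0$ is a \emph{primitive} sublattice of $\Lambda(X)$, and $\det L_0\ge1$ because the Gram matrix of an integral basis of $L_0$ is an integer matrix with positive determinant. Writing $P\colon V\to W$ for the orthogonal projection with kernel $K$, the standard product formula for a primitive sublattice gives $\det\Lambda(X)=\det L_0\cdot\det P(\Lambda(X))\ge\det P(\Lambda(X))$. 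Finally $\pi|_W\colon W\to\spn_\R(W(Y))$ is a linear isomorphism that is still a contraction, hence does not increase lattice covolumes, and since ${\ve v}-P({\ve v})\in K\subseteq\ker\pi$ for every ${\ve v}\in V$ we have $\pi(\Lambda(X))=\pi|_W\bigl(P(\Lambda(X))\bigr)$. Therefore $\det\pi(\Lambda(X))\le\det P(\Lambda(X))\le\det\Lambda(X)=H(X)$, which together with the reduction of the first paragraph gives $H(Y)\le H(X)$.

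I expect the obstacles to be expository rather than conceptual. The two points that genuinely need care are: (i) verifying that $\pi(\Lambda(X))$ is an honest sublattice of $\Lambda(Y)$ — it is discrete because $\pi$ carries integer vectors to integer vectors, and it is of full rank by the span computation above — and (ii) invoking correctly the orthogonal-projection identity $\det\Lambda(X)=\det L_0\cdot\det P(\Lambda(X))$, which needs $L_0$ to be primitive in $\Lambda(X)$; this is precisely where the rationality of $K$ is used. Beyond that, the main thing is bookkeeping: keeping the three lattices $\Lambda(X)\subset\ZZ^t$, $\Lambda(Y)\subset\ZZ^s$ and $\pi(\Lambda(X))$ clearly apart (the first two live in different ambient spaces) and tracking the ranks $r(X)$, $r(Y)$ and $\dim K=r(X)-r(Y)$.

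Finally, I note that when $r(Y)=r(X)$ the projection machinery is unnecessary: choosing a $\ZZ$-basis ${\ve b}_1,\dots,{\ve b}_r$ of $\Lambda(X)$ and applying the Cauchy--Binet formula, $H(X)^2$ equals the sum of the squares of all maximal minors of the $t\times r$ matrix $[{\ve b}_1\ \cdots\ {\ve b}_r]$, while $\det\pi(\Lambda(X))^2$ is the same sum restricted to the minors using only rows $1,\dots,s$ — a subsum of nonnegative terms. Because the rank-dropping case resists this shortcut, I would keep the projection argument as the main line of proof.
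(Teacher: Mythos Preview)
Your argument is correct, and its overall architecture matches the paper's: reduce to $Y=\{\ve x_1,\dots,\ve x_s\}$, project $\Lambda(X)\subset\ZZ^t$ onto the first $s$ coordinates to obtain a full-rank sublattice of $\Lambda(Y)$ (this is exactly the paper's $\Gamma=V_{[s]}\ZZ^r$), conclude $H(Y)\le\det\pi(\Lambda(X))$, and then prove the single inequality $\det\pi(\Lambda(X))\le\det\Lambda(X)$.

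Where you and the paper genuinely diverge is in that last inequality. The paper proves it by matrix algebra: it picks a $\ZZ$-basis $V$ of $\Lambda(X)$, applies a unimodular change $V'=VU$ putting suitable rows into Hermite normal form so that the first $s$ rows of $V'$ have $r(X)-r(Y)$ zero columns, and then uses the Cauchy--Binet expansion to exhibit $\det(\Gamma)^2$ as a sub-sum of the minors making up $\det(\Lambda(X))^2$. Your route is geometric: factor $\pi|_V$ as the orthogonal projection $P$ onto $W=V\cap K^\perp$ followed by the contraction $\pi|_W$, use the primitive-sublattice product formula $\det\Lambda(X)=\det L_0\cdot\det P(\Lambda(X))$ with $L_0=K\cap\ZZ^t$, and note $\det L_0\ge 1$ since $L_0\subset\ZZ^t$. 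Both handle the rank-dropping case; the paper's HNF step and your rational-kernel/primitivity step are doing the same work of isolating the ``lost'' $r(X)-r(Y)$ directions. Your argument is cleaner and coordinate-free; the paper's has the virtue of being entirely elementary (no appeal to the product formula). Two cosmetic remarks: you reuse the letter $W$ for both the matrix $W(X)$ and the subspace $V\cap K^\perp$, which is worth renaming; and your final paragraph's Cauchy--Binet shortcut for the equal-rank case is essentially the core of the paper's approach, which then extends it to unequal ranks via the HNF manoeuvre rather than abandoning it.
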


\begin{proof}[Proof of Lemma~\ref{projection_lemma}]
For $m \in \ZZ_{\ge 1}$ we denote by $[m]$ the set $\{1,2,\ldots,m\}$. Given a matrix $V\in\Z^{ m\times n}$ and a set $I\subset[m]$ we denote by $V_I$ the matrix with rows $i \in I$ of $V$.

Assume without loss of generality that $Y$ consists of the first $s$ elements of $X$.
Let $V\in\Z^{t \times r}$ be a matrix  with $r=r(X)$ columns that form a basis of the lattice $\Lambda=\Lambda(X)=V \Z^r$.
Let $\Gamma$ denote the projection of $\Lambda$ onto the first $s$ coordinates, i.e., $\Gamma=V_{[s]}\Z^r$.
We have $\spn_{\R}(W(Y))=\spn_{\R}(W(X)_{[s]})=\spn_{\R}(V_{[s]})$ and, consequently, $\Gamma\subset\Lambda(Y)$. Thus, $H(Y)\le \det(\Gamma)$.
Since $H(X)=\det(\Lambda)$, it is sufficient to show that
\begin{equation}\label{sufficient_inequ}
\det(\Gamma)\le \det(\Lambda)\,.
\end{equation}

We have that
\begin{equation*}\det(\Lambda)=\sqrt{\det(V^T V)}=\sqrt{\sum_{I\subset[t]\text{ s.t. }|I|=r}\det(V_I)^2}
\end{equation*}
(see e.g. \cite{MacLaneBirkhoff}, Chapter 16, Theorem 18.)
 Note that by the choice of $V$, we have $g=1$ in \eqref{SmithExpr}.

We denote $B=V_{[s]}$ and $k=\rank(B)$. Further, let $C=V_{[k]}$ and $D=V_{[s+r-k]\setminus[s]}$.
By permutation of the rows of $V$ we may assume without loss of generality that $\rank(B)=\rank(C)=k$ and $\rank((C^T\,D^T))=r$

Now let $U\in\Z^{r \times r}$ be unimodular such that the matrix $(C^T\,D^T)^TU$ is in the Hermite normal form (see \cite[Section 4.1]{schrijver}).
In particular $((C^T\,D^T)^TU)_{ij}=0$ for $i<j$.

Let $V'=VU$, $B'=BU$, $C'=CU$ and $D'=DU$.
Observe that $V'\Z^r=VU\Z^r =\Lambda$ and $B'\Z^r=BU\Z^r=\Gamma$ and
\begin{equation*}\sum_{I\subset[t]\text{ s.t. }|I|=r}\det(V_I)^2=\sum_{I\subset[t]\text{ s.t. }|I|=r}\det(V_IU)^2.
\end{equation*}
In particular $V'_{ij}=0$ for $i=1,\ldots,s$ and $j=1,\ldots,r-k$, other wise $\rank(B)>k$.

Let $\bar B$ denote the $k$ non-zero columns of $B'$.
It holds that $\bar B\Z^k=\Gamma$.
Let $J=\{s+1,\ldots,s+r-k\}$ and let $Id_n$ denote the $n\times n$ identity matrix.
Then
\begin{equation*}\begin{array}{rcl}\
(\det(\Gamma))^2 & = & \sum_{I\subset[s]\text{ s.t. }|I|=k}\det(\bar B_I)^2\\[0.5cm]
& = & \sum_{I\subset[s]\text{ s.t. }|I|=k}\det\left(\left(\begin{matrix}0 & \bar B_I \\ Id_{r-k} & 0 \end{matrix}\right)\right)^2 \\[0.5cm]
& \le & \sum_{I\subset[s]\text{ s.t. }|I|=k}\det\left(\left(\begin{matrix} B'_I \\ D' \end{matrix}\right)\right)^2 \\[0.5cm]
& = & \sum_{I\subset[s]\text{ s.t. }|I|=k}\det\left(V'_{I \cup J}\right)^2 \\[0.5cm]
& \le & \sum_{I\subset[t]\text{ s.t. }|I|=r}\det(V'_I)^2 =(\det(\Lambda))^2.\\
\end{array}
\end{equation*}
Hence (\ref{sufficient_inequ}) holds and the lemma is proved.

\end{proof}

Now we are ready to prove Theorem \ref{t:Bound_via_determinant}. 

\begin{proof}[Proof of Theorem~\ref{t:Bound_via_determinant}]

It is sufficient to show that
\begin{equation}\label{relax}
{\mathsf M}_0(X) \le r(X) + \log(H(X))\,.
\end{equation}
Suppose, to derive a contradiction, that ${\mathsf M}_0(X) > r(X) + \log(H(X))$. Then there exists ${\ve b}\in \icone(X)$ such that
\begin{equation*}\label{nonsparse}
{\mathsf m}_0({\ve b})>r(X) + \log(H(X)).
\end{equation*}
Let $Y$ be a subset of $X$ such that ${\ve b}\in\icone(Y)$ and
${\mathsf m}_0({\ve b})=|Y|$.
Observe that $|Y| > r(X)+ \log(H(X))$ implies $|Y| > r(Y)+ \log(H(Y))$.
Indeed, we clearly have $r(Y) \le r(X)$ and the inequality $\log(H(Y))\le \log(H(X))$ follows from Lemma \ref{projection_lemma}.
Therefore, by Lemma \ref{t:Lemma_ES_refinement},  ${\mathsf m}_0({\ve b})<|Y|$. The obtained contradiction implies (\ref{relax}) and completes the proof of part (i).

Similarly, to prove part (ii) it suffices to show that for any subset $Y$ of $X$ the inequality $|Y| > 2d\log(2\sqrt{d}||X||_{\infty})$ implies
%
\begin{equation*}|Y| > r(Y)+ \log(H(Y)).\,
\end{equation*}
%
First, observe that
\begin{equation}\label{eq:technical_bounds}
r(Y) \le d \;\; \mbox{and} \;\; H(Y) \le (\sqrt{|Y|}||X||_{\infty})^d\,.
\end{equation}
Suppose that $|Y| > 2d\log(2\sqrt{d}||X||_{\infty})$, that is,
\begin{equation*}||X||_{\infty}<\frac{1}{\sqrt{2d}}2^{\frac{|Y|-d}{2d}}\,.
\end{equation*}
By (\ref{eq:technical_bounds}), we see that
\begin{equation*}\begin{array}{rcl}
r(Y) + \log(H(Y))
&\le& d + d\log(\sqrt{|Y|}||X||_{\infty}) \\
&<& d + d\log\left(\sqrt{|Y|}\frac{1}{\sqrt{2d}}2^{\frac{|Y|-d}{2d}}\right) \\
&=& \frac{|Y|}{2}+\frac{d}{2}\log(\frac{|Y|}{d}) < |Y|\,,
\end{array}
\end{equation*}
which completes the proof of part (ii).
\end{proof}

\subsection{Knapsack case ($d = 1$) and sum-distinct sets}%
\label{Knapsack_subsection}

In this section we
prove Theorem \ref{t:Lemma_Positive_Knapsack_refinement} and propose a conjecture on the bounds for ${\mathsf M}_0(X)$ in terms of $\|X\|_{\infty}$ in the knapsack case.

The proof of Theorem \ref{t:Lemma_Positive_Knapsack_refinement} will easily follow from the following lemma.

\begin{lemma}\label{t:Lemma_knapsack_refinement}
Let $X$ be a finite set of positive integers and let ${b} \in \icone(X)$. If
\begin{equation}\label{eq:Bound_via_minor}
|X| > 1 + \log(\|X\|_{\infty})\,
\end{equation}
then there exists a proper subset $Y\subset X$ such that ${b} \in \icone(Y)$.
\end{lemma}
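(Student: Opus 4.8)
The plan is to build, for any $b\in\icone(X)$, a proper subset $Y\subsetneq X$ with $b\in\icone(Y)$, by producing a carefully chosen integer relation among the elements of $X$ and using it to rewrite a representation of $b$ with one fewer generator; note that a direct appeal to Siegel's Lemma as in Lemma~\ref{t:Lemma_ES_refinement} only yields, in the one-dimensional case, a relation whose entries have absolute value up to $2$, which is not enough to carry out the subtraction cleanly, so a different combinatorial device is needed. Enumerate $X=\{x_1,\dots,x_t\}$ with $t=|X|$, and fix a representation $b=\lambda_1x_1+\dots+\lambda_tx_t$ with all $\lambda_i\in\Z_{\ge0}$. If some $\lambda_j=0$ we are done with $Y=X\setminus\{x_j\}$, so assume $\lambda_i\ge1$ for every $i$. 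Observe also that the hypothesis $|X|>1+\log(\|X\|_\infty)$ is, after exponentiating in base two, exactly the statement $2^{t-1}>\|X\|_\infty$.

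The heart of the argument is a pigeonhole step. Consider the $2^{t-1}$ subsets $I\subseteq\{2,\dots,t\}$ and attach to each the residue of $\sum_{i\in I}x_i$ modulo $x_1$. Since there are only $x_1\le\|X\|_\infty<2^{t-1}$ residue classes, two distinct subsets $I\neq J$ produce the same residue. Setting $A=I\setminus J$ and $B=J\setminus I$, which are disjoint subsets of $\{2,\dots,t\}$ and not both empty, we obtain
\begin{equation*}
\sum_{i\in A}x_i-\sum_{i\in B}x_i=k\,x_1\qquad\text{for some }k\in\Z.
\end{equation*}
Replacing $(I,J)$ by $(J,I)$ if needed, we may assume $k\ge0$; then, because every $x_i$ is strictly positive, $A$ cannot be empty.

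Now I use this relation to reduce the support. Pick $j\in A$ minimising $\lambda_i$ over $i\in A$, and define $\mu_1=\lambda_1+k\lambda_j$, $\mu_i=\lambda_i-\lambda_j$ for $i\in A$, $\mu_i=\lambda_i+\lambda_j$ for $i\in B$, and $\mu_i=\lambda_i$ for every remaining index. A direct computation using $\sum_{i\in A}x_i-\sum_{i\in B}x_i=k x_1$ gives $\sum_{i=1}^t\mu_ix_i=b$. All $\mu_i$ are non-negative integers: the only coordinates that decrease are those in $A$, where $\mu_i=\lambda_i-\lambda_j\ge0$ by the choice of $j$; and $\mu_1=\lambda_1+k\lambda_j\ge\lambda_1\ge1$ since $k\ge0$. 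Finally $\mu_j=\lambda_j-\lambda_j=0$, so $b\in\icone(X\setminus\{x_j\})$, and $Y=X\setminus\{x_j\}$ is the desired proper subset.

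I expect the only genuinely delicate point to be the sign bookkeeping in the last step: one must zero out a coordinate that carries the entry $-1$ in the relation (so that the operation performed is the subtraction of $\lambda_j$ copies of $x_j$), and check that the coefficient of $x_1$, which changes by the non-negative amount $k\lambda_j$, together with the $B$-coordinates remains non-negative; the choice of $j$ as an argmin over $A$ is precisely what keeps the remaining $A$-coordinates non-negative. Everything else reduces to the pigeonhole count above and the trivial reduction to the case where all $\lambda_i$ are positive.
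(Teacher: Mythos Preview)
Your proof is correct. Both your argument and the paper's hinge on finding an integer relation $\sum_i y_i x_i=0$ in which all but one of the $y_i$ lie in $\{-1,0,1\}$, and then using it to zero out a coordinate of a full-support representation of $b$. The difference is in how that relation is produced and how the reduction is carried out. The paper applies Minkowski's first theorem to the lattice $\Gamma=\{\vv y\in\Z^{t-1}: y_1x_1+\cdots+y_{t-1}x_{t-1}\equiv 0\pmod{x_t}\}$ to obtain $\vv u\in\{-1,0,1\}^{t-1}$, lifts it to a relation by adjoining a (possibly large) $x_t$-coefficient, and then argues geometrically via the projected knapsack simplex $S_X(b)$, splitting into cases according to the sign pattern of $\vv u$. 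Your pigeonhole on subset sums of $\{x_2,\dots,x_t\}$ modulo $x_1$ produces the same kind of relation directly---coefficients $\pm1$ on $A\cup B\subseteq\{2,\dots,t\}$ and an overflow coefficient $-k$ on $x_1$---but with the extra structural feature that you may arrange $k\ge0$ and hence $A\neq\emptyset$. This lets you subtract $\lambda_j$ copies of the relation (for $j=\arg\min_{i\in A}\lambda_i$) and avoid the paper's case analysis entirely: the $x_1$-coordinate only increases, the $B$-coordinates only increase, and the $A$-coordinates stay nonnegative by the choice of $j$. Your route is more elementary and self-contained (no geometry of numbers, no simplex picture); the paper's route, on the other hand, fits into the Siegel's-lemma framework used throughout Section~\ref{s:caratheodory} and makes the connection to sum-distinct sets explicit.
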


\begin{proof}[Proof of Lemma~\ref{t:Lemma_knapsack_refinement} ]
Let  $X = \{x_1,\ldots, x_t\} \subset \ZZ^{t}_{>0}$  with $t \ge 2$.
%
%
Consider the {\em knapsack polytope}
\begin{equation*}
Q_X(b)=\{{\ve y}\in \R^t_{\ge 0}: W(X)^T {\ve y}=b\}\,.
\end{equation*}
The polytope $Q_X( b)$ is a $(t-1)$-dimensional simplex in $\R^t$ with vertices
\begin{equation*}
(b/x_1,0, \ldots, 0)^T, (0, b/x_2, \ldots, 0)^T, \ldots, (0, \ldots, 0, b/x_t)^T\,.
\end{equation*}

We will show that there exists an integer point point ${\ve \mu}=(\mu_1 , \ldots , \mu_t)^T\in Q_X( b)$ with $\mu_i = 0$ for at least one $i$.  Indeed, this means that there exists a proper subset $Y$ of $X$ such that $b \in \icone(Y)$.

We will work with the $(t-1)$-dimensional simplex
$
S_X(b)= \pi_t(Q_X(b))\subset \R^{t-1}\,,
$
where $\pi_t(\cdot): \R^t\rightarrow \R^{t-1}$ is the projection that forgets the last coordinate.
Observe that
\begin{equation*}
S_X( b) = \left\{ {\ve y} \in \R_{\geq 0}^{t-1} : { y}_1\,x_1+\cdots +{ y}_{t-1}\,x_{t-1}\leq b \right\}\,
\end{equation*}
and, as all $x_i>0$, the projection map $\pi_t(\cdot)$ establishes a bijection between $Q_X( b)$ and $S_X(b)$.

Let ${\ve \lambda}$ be any integer point in $Q_X( b)$. Suppose that ${\ve \lambda}\in \Z^t_{>0}$.
Clearly, ${\ve \beta}=\pi_t({\ve \lambda})\in S_X( b)$.
Consider the lattice
\begin{equation*}
\Gamma=\{{\ve y}\in \Z^{t-1}: y_1 x_1+\cdots+ y_{t-1}x_{t-1}\equiv 0 (\modulo x_t)\}\,.
\end{equation*}
The lattice $\Gamma$ has determinant $\det(\Gamma)= x_t/g$, where $g=\gcd(x_1,\ldots, x_t)$ (see e.g. Corollary 3.2.20 in \cite{Dwork}).


By Minkowski's first fundamental theorem (see e.g. \cite{GrLek}), applied to $\Gamma$ and the cube $[-1,1]^{t-1}$, there exists a nonzero ${\ve u}\in \Gamma$ such that
\begin{equation}\label{Minkowski_bound_knapsack}
\|{\ve u}\|_{\infty}\le (\det(\Gamma))^{1/(t-1)}
\le x_t^{1/(t-1)}\,.
\end{equation}
By (\ref{eq:Bound_via_minor}), we have $x_t< 2^{t-1}$. Hence, together with (\ref{Minkowski_bound_knapsack}), the point ${\ve u} = (u_1, \ldots, u_{t-1})^T$ has entries $u_i \in \{-1,0,1\}$.
For some integer $z$, the point ${\ve p}=(u_1, \ldots, u_{n-1}, z)^T$ satisfies
\begin{equation}\label{inaffinelattice}
W(X)^T{\ve p}=0\,.
\end{equation}

Suppose first that all nonzero entries of ${\ve u}$ are of the same sign.
In this case we may assume without loss of generality that all nonzero
entries of ${\ve u}$ are positive. Then for some integer $k$ the point
${\ve \beta} - k{\ve u}\in S_X( b)$ will have less than $t-1$ nonzero entries.
Therefore, the point
${\ve \mu}={\ve \lambda} - k{\ve p}$ will have less than $t$ nonzero entries.
By construction, $\pi_t({\ve \mu})\in S_X(b)$. On the other hand, by
(\ref{inaffinelattice}), $W(X)^T{\ve \mu}=b$. This implies ${\ve \mu}\in Q_X( b)$.
Hence this case is settled.

Suppose now that there are entries $u_i$ and $u_j$
with $u_i u_j<0$.
Then for some positive integers $k, l$ the points  ${\ve \beta} - k{\ve u}, {\ve \beta} + l{\ve u} \in  \Z^{t-1}_{\ge 0}$
will have each less than $t-1$ nonzero entries.
Clearly, at least one of these points is in the simplex $S_X( b)$. If  ${\ve \beta} - k{\ve u}\in S_X( b)$
then set ${\ve \mu}={\ve \lambda} - k{\ve p}$. Otherwise, set ${\ve \mu}={\ve \lambda} + l{\ve p}$.
Hence, as in the  previous case, the point
${\ve \mu}\in Q_X( b)$ will have less than $t$ nonzero entries. The lemma is proved.

\end{proof}

\begin{proof}[Proof of Theorem~\ref{t:Lemma_Positive_Knapsack_refinement} ]

It is sufficient to show that
\begin{equation}\label{relax_knapsack}
{\mathsf M}_0(X) \le 1 + \log(\|X\|_{\infty})\,.
\end{equation}
Suppose, to derive a contradition, that ${\mathsf M}_0(X) > 1 + \log(\|X\|_{\infty})$. Then there exists ${b}\in \icone(X)$ such that
\begin{equation}\label{nonsparse_knapsack}
{\mathsf m}_0({b})>1 + \log(\|X\|_{\infty}).
\end{equation}
Let $Y$ be a subset of $X$ such that ${ b}\in\icone(Y)$ and
$
{\mathsf m}_0({ b})=|Y|.
$
By (\ref{nonsparse_knapsack}) we have $|Y|>1 + \log(\|Y\|_{\infty})$. Therefore, by  Lemma \ref{t:Lemma_knapsack_refinement},  ${\mathsf m}_0({ b})<|Y|$. The obtained contradiction implies
(\ref{relax_knapsack}).
\end{proof}


We now  discuss the bounds for ${\mathsf M}_0(X)$ in terms of $\|X\|_{\infty}$ for the general knapsack problem.
A minor improvement on part (ii) of Theorem \ref{t:Bound_via_determinant} for $d=1$ can be obtained by using a version of Siegel's Lemma that works with the maximum norm.
Let $A\in \ZZ^{1\times n}$, $n\ge 2$, be a  matrix with nonzero entries.  Siegel's Lemma with respect to the maximum norm asks for the smallest possible constant $c_n > 0$ such that the equation $A{\ve y} = 0$ has a solution ${\ve y} \in \ZZ^n$ with
%
\begin{equation*}0 < ||{\ve y}||_\infty^{n-1} \le c_n||A||_\infty\,.
\end{equation*}
%
The only known exact values of $c_n$ are $c_2 = 1$, $c_3 = 4/3$ and $c_4=27/19$
(see \cite{NewSL}).  The upper bound $c_n\le\sqrt{n}$ immediately follows from Theorem~\ref{t:SiegelsLemma}. It was shown in \cite{SLsumdistinct} that $c_n\le \sigma_n^{-1}$,
where $\sigma_n$ denotes the \emph{sinc} integral
\begin{equation*}\sigma_n = \frac{2}{\pi}\int_0^\infty \left(\frac{\sin x}{x}\right)^n dx\,.
\end{equation*}
Since $\sigma_n^{-1}\sim\sqrt{\frac{\pi n}{6}}$ as $n\rightarrow\infty$, the latter bound asymptotically improves the estimate $c_n\le\sqrt{n}$.
The sequences of numerators and denominators of $\sigma_n/2$ can be found in \cite{Sloane} as sequences A049330 and A049331, respectively.
Using the bound  $c_n\le \sigma_n^{-1}$, we obtain the following version of Lemma \ref{t:Lemma_ES_refinement} in terms of $\|X\|_{\infty}$.

%
\begin{lemma}\label{t:Lemma_Knapsack_refinement}
Let $X$ be a finite set of integers and let $b\in \icone(X)$. If
\begin{equation}\label{eq:Knapsack_Bound_via_determinant}
|X| > 1 -\log(\sigma_{|X|}) + \log(||X||_{\infty})\,.
\end{equation}
then there exists a proper subset $Y \subset X$ such that $b \in \icone(Y)$.
\end{lemma}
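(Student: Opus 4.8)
The plan is to mimic the proof of Lemma~\ref{t:Lemma_ES_refinement}, replacing the Bombieri--Vaaler version of Siegel's Lemma (which controls the geometric mean of the $\|{\ve y}_l\|_\infty$ via $\sqrt{\det(AA^T)}$) by the maximum-norm version of Siegel's Lemma with the constant $c_n \le \sigma_n^{-1}$. Concretely, write $X = \{x_1,\dots,x_t\} \subset \ZZ$ with $t = |X|$ and let $A = W(X)^T \in \ZZ^{1\times t}$ be the single row $(x_1,\dots,x_t)$; note $\|A\|_\infty = \|X\|_\infty$. Since $t \ge 2$ (the hypothesis \eqref{eq:Knapsack_Bound_via_determinant} forces $|X| \ge 2$), the maximum-norm Siegel Lemma gives a nonzero ${\ve y} \in \ZZ^t$ with $A{\ve y} = 0$ and $\|{\ve y}\|_\infty^{t-1} \le c_t\|A\|_\infty \le \sigma_t^{-1}\|X\|_\infty$.

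Next I would use the hypothesis to force ${\ve y}$ to have entries in $\{-1,0,1\}$. The inequality \eqref{eq:Knapsack_Bound_via_determinant} says $t > 1 - \log(\sigma_t) + \log(\|X\|_\infty)$, i.e. $\log(\sigma_t^{-1}\|X\|_\infty) < t-1$, i.e. $\sigma_t^{-1}\|X\|_\infty < 2^{t-1}$. Combined with the norm bound this gives $\|{\ve y}\|_\infty^{t-1} < 2^{t-1}$, hence $\|{\ve y}\|_\infty < 2$, so $y_i \in \{-1,0,1\}$ for all $i$, and ${\ve y}$ is a nonzero integer relation $y_1 x_1 + \cdots + y_t x_t = 0$ with all coefficients in $\{-1,0,1\}$.

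The final step is the standard support-reduction argument, already carried out in Lemma~\ref{t:Lemma_ES_refinement} (and, in a two-sided form, in Lemma~\ref{t:Lemma_knapsack_refinement}). Take any representation $b = \lambda_1 x_1 + \cdots + \lambda_t x_t$ with $\lambda_i \in \ZZ_{\ge 0}$; we may assume all $\lambda_i > 0$, since otherwise we are already done with $Y = \{x_i : \lambda_i > 0\}$. One must be slightly careful because, unlike the positive-knapsack case, here $X$ may contain negative integers, so the indices with $y_i \ne 0$ of a single sign need not all shrink $\lambda_i$; the cleanest route is to split on the sign pattern of ${\ve y}$ exactly as in the proof of Lemma~\ref{t:Lemma_knapsack_refinement}. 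If all nonzero $y_i$ have the same sign, replace ${\ve y}$ by $\pm{\ve y}$ so they are all $+1$, set $\lambda = \min_{i:y_i \ne 0}\lambda_i$, and then $b = \sum_i (\lambda_i - \lambda y_i)x_i$ has all coefficients in $\ZZ_{\ge 0}$ with at least one equal to $0$. If instead there are indices $i,j$ with $y_i y_j < 0$, then for suitable positive integers $k$, the vector $\lambda - k{\ve y}$ remains nonnegative for $k$ up to the first moment some coordinate hits $0$; taking that $k$ produces a nonnegative representation with strictly smaller support. Either way we obtain a proper subset $Y \subset X$ with $b \in \icone(Y)$.

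The only real subtlety — and the place I would spend the most care — is the sign bookkeeping in the last step when $X$ has mixed signs: one needs to check that some nonnegative combination with a zero coordinate is always reachable, which is immediate once one observes that along the ray $\lambda - k{\ve y}$ (for the sign of $k$ chosen so that $b$ is preserved and at least one strictly positive coordinate is being decreased) the coordinates are affine in $k$ and the positive ones force a first zero at a positive integer value of $k$, since all $y_i \in \{-1,0,1\}$ and all $\lambda_i$ are positive integers. Everything else is a routine substitution of the sharper Siegel constant $\sigma_n^{-1}$ for $\sqrt{n}$ into the argument already given. I expect no essential obstacle beyond this.
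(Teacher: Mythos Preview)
Your proof is correct and follows essentially the same route as the paper's: apply the maximum-norm Siegel's Lemma with the constant $c_t \le \sigma_t^{-1}$ to obtain a nonzero null vector ${\ve y}$ with $\|{\ve y}\|_\infty < 2$, then reduce support as in Lemma~\ref{t:Lemma_ES_refinement}. The paper simply refers back to that lemma for the final step.

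One remark: your concern about ``sign bookkeeping when $X$ has mixed signs'' is a red herring, and the case split borrowed from Lemma~\ref{t:Lemma_knapsack_refinement} is unnecessary here. The signs of the $x_i$ never enter the support-reduction argument; all that is used is that $W(X)^T{\ve y}={\ve 0}$, that $y_i\in\{-1,0,1\}$, and that all $\lambda_i>0$. The one-line argument from Lemma~\ref{t:Lemma_ES_refinement} already handles every case: pick $i_0$ minimizing $\lambda_i$ over $\{i:y_i\neq 0\}$, replace ${\ve y}$ by $-{\ve y}$ if needed so that $y_{i_0}=1$, and set $\mu_i=\lambda_i-\lambda_{i_0}y_i$.
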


\begin{proof}[Proof of Lemma~\ref{t:Lemma_Knapsack_refinement}]
Writing $t = |X|$, the inequality (\ref{eq:Knapsack_Bound_via_determinant}) implies that
\begin{equation*}||X||_{\infty} < \sigma_t 2^{t-1}.
\end{equation*}
By {\cite[Theorem~1]{SLsumdistinct}}, there exists a nonzero ${\ve y} \in \ZZ^t$ such that $y_1 x_1 + \cdots + y_t x_t = 0$ and
%
\begin{equation}\label{less_than_2}||{\ve y}||_\infty \le \left(\frac{||X||_{\infty}}{\sigma_t}\right)^{\frac{1}{t-1}} <  2\,.
\end{equation}
%
As in the proof of Lemma~\ref{t:Lemma_ES_refinement} we now observe that ${\ve y} = (y_1, \ldots, y_t)^T$ has entries $y_i \in \{-1,0,1\}$, yielding the proper subset $Y$ of $X$.
\end{proof}
Lemma \ref{t:Lemma_Knapsack_refinement}
can be used to obtain a minor asymptotic improvement on part (ii) of Theorem \ref{t:Bound_via_determinant} for $d=1$.


It was observed by Schinzel (personal communication) that Siegel's Lemma is closely related to the following well known problem from additive number theory.  A finite set $X = \{x_1, \ldots, x_t\} \subset \ZZ$ of integers is called \emph{sum--distinct} if any two of its $2^t$ subsums differ by at least $1$.  We shall assume w.l.o.g.\ that $0 < x_1 < x_2 < \cdots < x_t$.  In 1955, Erd\H{o}s and  Moser (\cite{EM}, Problem 6) asked for an estimate on the least possible $x_t$ of such a set, and Erd\H{o}s conjectured that $x_t > c_0 2^t$ for some absolute constant $c_0 > 0$.
It follows from {\cite[Theorem~1]{SLsumdistinct}} that a sum-distinct set $X=\{x_1, \ldots, x_t\}$ satisfies
$ ||X||_{\infty} > \sigma_t 2^{t-1}$
(cf. (\ref{less_than_2}).)
An unpublished result by Elkies and Gleason asymptotically improves this bound by a factor of $2/\sqrt{3}$.

Observe that the set $X$ is not sum-distinct if and only if  there exist $y_i\in \{-1,0,1\}$, not all zero, such that $y_1 x_1 + \cdots + y_t x_t = 0$.
Hence, an affirmative answer to Erd\H{o}s-Moser conjecture would imply that (\ref{eq:Knapsack_Bound_via_determinant}) can be replaced by the inequality
\begin{equation*}\label{IfEM}
|X|\ge  -\log(c_0) + \log(||X||_{\infty})\,.
\end{equation*}
Lemma \ref{t:Lemma_knapsack_refinement} allows us to make such a replacement for sets of positive integers.
Based on this, we conjecture that for $d=1$ there exists a positive integer $c$ such that
\begin{equation*}
{\mathsf M}_0(X)\le c+\lfloor\log(||X||_{\infty})) \rfloor\,.
\end{equation*}

As a final remark for Section \ref{s:caratheodory} we wish to discuss how tight are the bounds we have presented.
The bound in part (i) of Theorem \ref{t:Bound_via_determinant}
is in fact attained for certain sets of generators in the cases $d=2$ and $d=3$. To see this,
following \cite{EisenbrandShmonin2006}, we consider the sets $X\subset \Z^d$ defined for $n\ge 2$ as
\begin{equation*}
X=\{{\ve x}_{ij}: {\ve x}_{ij}=2^i {\ve e}_j+{\ve e}_d\,, i=0,\ldots, n-1\,, j=1, \ldots, d-1\}\,,
\end{equation*}
where ${\ve e}_j$ is the $j$th standard basis vector.

For ${\ve b}=(2^{n}-1)\sum_{j=1}^{d-1}{\ve e}_j+ n(d-1){\ve e}_d$ we clearly have ${\mathsf m}_0({\ve b})=n(d-1)$ and, consequently, ${\mathsf M}_0(X)=n(d-1)$.
This value coincides with the upper bound (i) of Theorem \ref{t:Bound_via_determinant} for $d=2$ with $n=2,3$ and for $d=3$ with $n=2$.


\section{Periodicity of the function $\mathsf m_0$}
\label{s:multiquasi}

In polyhedral combinatorics and combinatorial optimization it is well-known that there are interesting properties of dilation of polyhedra. For instance, the reader may be familiar with functions that behave well under dilation, such as the volume and the number of lattice points; their behaviour is captured by Ehrhart's Theorem (see \cite{BarviPom} for an introduction).  In this section we explain  the asymptotic behavior of the norm-based function $\mathsf m_0(\vv b)$, both in the most general setting (Theorem~\ref{t:eventualm0affine}) and for knapsack problems (Theorem~\ref{t:eventualm0knap}).  We prove in particular that $\mathsf m_0:\icone(X) \to \ZZ_{\ge 0}$ is eventually periodic. Let us consider a motivating example, a simple knapsack problem.

\begin{example}\label{e:eventualm0knap}
Let $X = \{4,6,15\} \subset \ZZ$, and let $S = \icone(X)$.  Depicted in Figure~\ref{f:m0_polytope} are the polytopes $Q_X(25)$ and $Q_X(85)$.  There is a single integer solution in $\mathsf P_X(85)$ that lies on a coordinate plane, namely $(10,0,3)^T$, so $\mathsf m_0(85) = 2$.  On the other hand, the only integer solution in $\mathsf P_X(25)$ is $(1,1,1)^T$, so $\mathsf m_0(25) = 3$.

Notice that $85 - 25 = 60 = \lcm(X)$, so any integer solution in $\mathsf P_X(25)$ produces a solution in $\mathsf P_X(85)$ by sufficiently increasing any single component.
Geometrically, the drop in 0-norm value occurs because the point $(10,0,3)^T \in \mathsf P_X(85)$ cannot be obtained in this way from a solution in $\mathsf P_X(25)$.
\end{example}

\begin{figure}[ht]
\begin{center}
\includegraphics[width=3in]{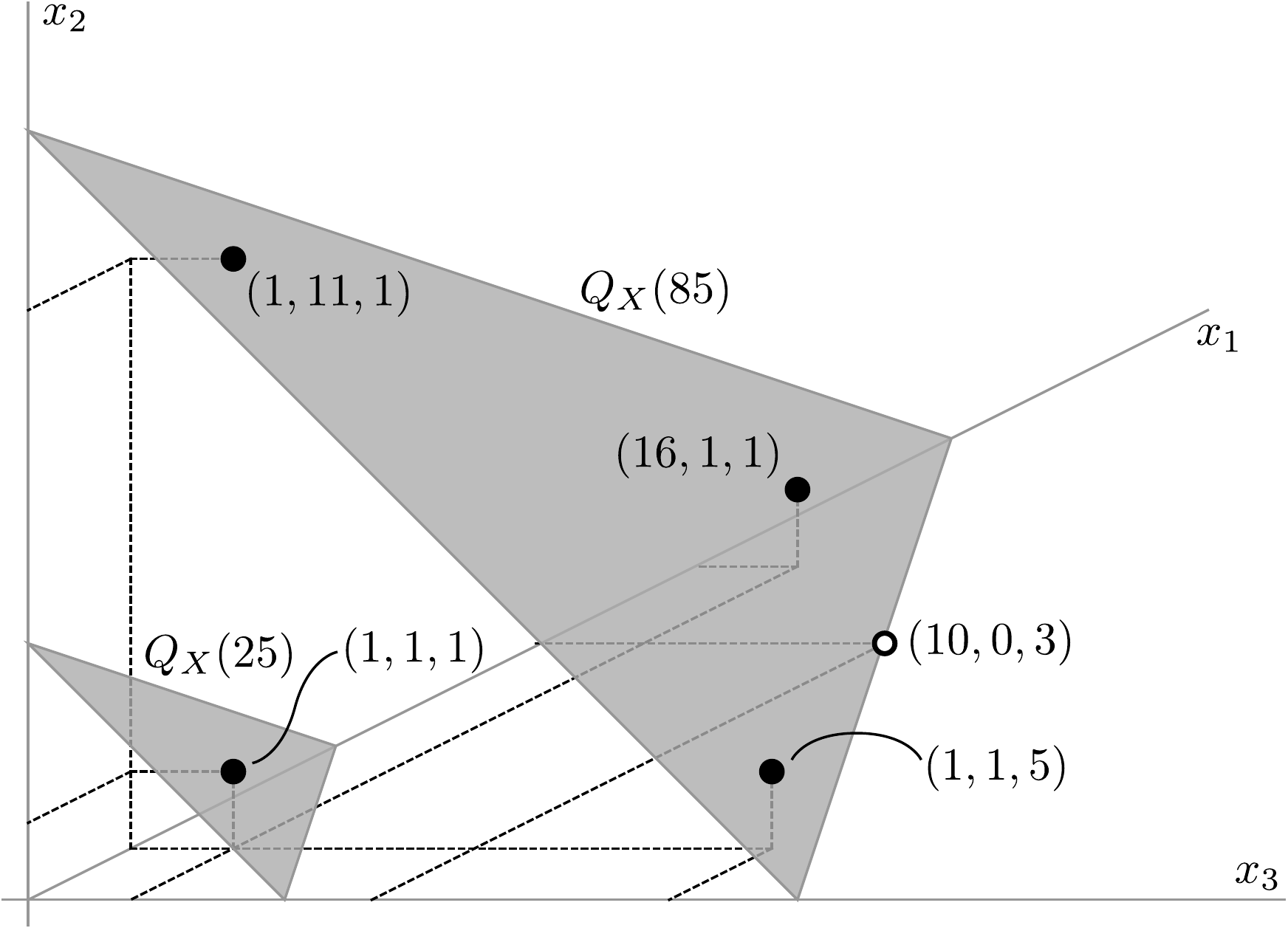}
\end{center}
\caption{Polytopes $Q_X(25)$ and $Q_X(85)$, with $X = \{4,6,15\}$ as in Example~\ref{e:eventualm0knap}.}
\label{f:m0_polytope}
\end{figure}


Recall that a function $f:\ZZ_{\ge 0} \to \QQ$ is a \emph{quasipolynomial of degree $k$} if there exist periodic functions $a_0, \ldots, a_k:\ZZ_{\ge 0} \to \QQ$ such that
$$f(b) = a_k(b)b^k + \cdots + a_1(b)b + a_0(b)$$
and $a_k$ is not identically zero.  The \emph{period of $f$} is the minimal positive integer $\pi$ such that $a_i(b + \pi) = a_i(b)$ for all $i \le k$ and $b \in \ZZ_{\ge 0}$.  The statement of Theorem~\ref{t:eventualm0affine} requires an appropriate multivariate analog.

\begin{defn}\label{d:eventualquasipolynomial}
Fix $f:\ZZ_{\ge 0}^d \to \RR$, linearly independent $\vv x_1, \ldots, \vv x_t \in \ZZ_{\ge 0}^d$, and $\vv b \in \ZZ_{\ge 0}^d$.
\begin{enumerate}[(a)]
\item
The \emph{cone generated by $\vv x_1, \ldots, \vv x_t$ translated by $\vv b$} is the set
\begin{center}
$C = C(\vv b;\vv x_1, \ldots, \vv x_t) = \vv b + \icone(\{\vv x_1, \ldots, \vv x_t\}) \subset \ZZ_{\ge 0}^d$.
\end{center}

\item
The function $f$ is a \emph{simple quasipolynomial supported on the cone $C$} if (i) $f$ vanishes outside of $C$ and (ii) $f$ coincides with a polynomial when restricted to $C$.  The \emph{degree of $f$} is the degree of the restriction of $f$ to $C$.


\item
The function $f$ is \emph{eventually quasipolynomial} if it is a finite sum of simple quasipolynomials.  The \emph{degree} of $f$ is the minimal integer $k$ such that $f$ can be written as a sum of simple quasipolynomials of degree at most $k$.

\end{enumerate}
\end{defn}

We are now ready to state the main result for this section (Theorem~\ref{t:eventualm0affine}).

\begin{thm}\label{t:eventualm0affine}
For any finite set of non-negative integer vectors $X \subset \ZZ^d$, $\mathsf m_0:\icone(X) \to \ZZ_{\ge 0}$ is eventually quasiconstant, i.e.\ a quasipolynomial of degree $0$.
\end{thm}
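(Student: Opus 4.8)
The plan is to show that $\mathsf m_0$ takes only finitely many distinct values on $\icone(X)$ and that the fiber of each value is eventually a union of translated cones of the shape appearing in Definition~\ref{d:eventualquasipolynomial}(a); summing the indicator functions of these fibers (each weighted by the corresponding $\mathsf m_0$-value) then exhibits $\mathsf m_0$ as a finite sum of simple quasipolynomials of degree $0$. The starting point is that $\mathsf m_0$ is bounded: by Theorem~\ref{t:Bound_via_determinant}(i) we have $\mathsf m_0(\vv b) \le \mathsf M_0(X) \le r(X) + \lfloor\log H(X)\rfloor$ for every $\vv b \in \icone(X)$, so it suffices to understand, for each fixed $k \ge 1$, the structure of the level set $L_k = \{\vv b \in \icone(X) : \mathsf m_0(\vv b) \le k\}$; then $\mathsf m_0 = \sum_{k\ge 1} \mathbf 1_{\icone(X)\setminus L_k}$ plus a constant, a finite sum.

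First I would analyze $L_k$ directly. For each $k$-element (or smaller) subset $Y = \{\vv x_{i_1},\dots,\vv x_{i_j}\} \subseteq X$, the set $\icone(Y)$ of non-negative integer combinations of $Y$ is a finitely generated affine semigroup, hence (by the standard structure theory of such semigroups, e.g.\ via Gordan/Hilbert-basis arguments or generating-function rationality) equals a finite union of translates $\vv b_\ell + \icone(Z_\ell)$ where each $Z_\ell$ is a linearly independent subset of $Y$ — in other words, a finite union of ``simplicial'' translated cones exactly of the form $C(\vv b_\ell; \vv z_1,\dots,\vv z_m)$ in Definition~\ref{d:eventualquasipolynomial}(a). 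Since $L_k = \bigcup_{|Y|\le k} \icone(Y)$ is a finite union of such sets, $L_k$ is itself a finite union of translated simplicial cones. The indicator function $\mathbf 1_{L_k}$ is therefore a finite $\ZZ$-linear combination of indicator functions $\mathbf 1_{C}$ of translated simplicial cones (inclusion–exclusion across the finitely many pieces), and each $\mathbf 1_C$ is by definition a simple quasipolynomial of degree $0$ supported on $C$. The same holds for $\mathbf 1_{\icone(X)}$ (take $k = t$), hence for $\mathbf 1_{\icone(X)\setminus L_k} = \mathbf 1_{\icone(X)} - \mathbf 1_{L_k}$, and summing over the finitely many relevant $k$ gives the claim.

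The main obstacle is the bookkeeping in the inclusion–exclusion step: a finite union of translated cones is a quasipolynomial of degree $0$, but one must be careful that intersections of translated simplicial cones are again finite unions of translated simplicial cones (so the inclusion–exclusion stays within the class), and that ``translated by $\vv b$'' in Definition~\ref{d:eventualquasipolynomial}(a) is flexible enough to absorb the base-point shifts $\vv b_\ell$ produced by the semigroup decomposition — here the hypothesis $X \subset \ZZ^d_{\ge 0}$ (non-negative vectors) is used to keep all base points in $\ZZ^d_{\ge 0}$. A clean way to handle this uniformly is to pass to generating functions: encode each $\icone(Y)$ by its multivariate rational generating series $\sum_{\vv b \in \icone(Y)} \ttt^{\vv b}$, which by the theory of rational generating functions for polyhedral semigroups is a rational function with denominator a product of terms $1 - \ttt^{\vv x_i}$; the series for $L_k$, and then for $\{\vv b : \mathsf m_0(\vv b) = k\}$, are obtained by finitely many unions/complements and hence remain rational of the same shape, and rationality of this shape is exactly equivalent to being eventually a sum of simple quasipolynomials of degree $0$. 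I would present the argument via this generating-function route, as it makes the finiteness and the degree-$0$ claim transparent and sidesteps ad hoc inclusion–exclusion estimates.
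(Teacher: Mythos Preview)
Your approach is correct but takes a genuinely different route from the paper's. The paper multigrades $\kk[z_1,\dots,z_t]$ by $\deg z_i=\vv x_i$, forms the descending chain of squarefree monomial ideals $I_j=\langle\prod_{i\in T}z_i:|T|=j\rangle$, observes that $\mathcal H(I_j/I_{j+1};\vv b)>0$ exactly when some $\vv y\in\mathsf P_X(\vv b)$ has $\|\vv y\|_0=j$, and then invokes the multigraded Hilbert theorem (Theorem~\ref{t:hilbert2}) once to conclude that each $\chi_j$, and hence $\mathsf m_0$, is eventually quasiconstant. You instead work directly with the sublevel sets $L_k=\bigcup_{|Y|\le k}\icone(Y)$ and build $\mathbf 1_{L_k}$ out of indicators of affine semigroups via inclusion--exclusion and rational generating series. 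The paper's route is shorter once Theorem~\ref{t:hilbert2} is available as a black box, and the ideal chain packages the support-size stratification very cleanly; your route is more elementary and keeps everything at the level of semigroups and their generating functions, but the burden shifts to checking that the relevant class is closed under the Boolean operations you use. The one point you flag and should make precise is that the intersection of finitely many affine semigroups $\icone(Y_1),\dots,\icone(Y_r)$ is again a finitely generated monoid (hence has a rational generating series of the required shape): this follows from Gordan's lemma applied to the fiber-product monoid $\{(\vv\lambda^{(1)},\dots,\vv\lambda^{(r)})\in\prod_i\ZZ_{\ge 0}^{|Y_i|}:\text{all }\sum_j\lambda^{(i)}_j\vv y^{(i)}_j\text{ agree}\}$. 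With that in hand your inclusion--exclusion goes through, and boundedness of the resulting indicator forces degree~$0$. It is worth noting that the paper's final step---passing from ``$\mathcal H(I_j/I_{j+1};-)$ is eventually quasipolynomial'' to ``$\chi_j$ is eventually quasiconstant''---is itself not entirely free, and unwinding it leads to essentially the same union-of-semigroups picture you are using.
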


Focusing our attention on the case of knapsack problems (that is, when $X \subset \ZZ_{> 0}$), Theorem~\ref{t:eventualm0affine} implies that the function $\mathsf m_0(b)$ is eventually periodic.  Our next main result, Theorem~\ref{t:eventualm0knap}, gives a more direct proof of this fact; in doing so, a bound on the starting point of this periodic behavior and a precise value for the minimal period are achieved.

In what follows, for $X' \subset \ZZ_{> 0}$, let
$$F(X') = \max(\icone(\gcd(X')) \setminus \icone(X')),$$
which coincides with the {\em Frobenius number} (see e.g. \cite{JRA}) when $\gcd(X') = 1$.

\begin{thm}\label{t:eventualm0knap}
Fix a set $X$ of positive integers, let $L = \lcm(X)$, and let
$$N_0 = \max\{F(X') : X' \subset X\}.$$
Each $b > N_0$ satisfies
$$\mathsf m_0(b + L) = \mathsf m_0(b).$$
Moreover, $\lcm(X)$ is the minimal value of $L$ for which the above statement holds.
\end{thm}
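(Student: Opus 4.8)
The plan is to prove the two assertions separately: first, that periodicity with period $L=\lcm(X)$ holds for all $b>N_0$, and second, that no smaller period works. For the first part, fix $b>N_0$ and write $t=|X|$, $X=\{x_1,\dots,x_t\}$. The key observation is that since $b>N_0\ge F(X')$ for every subset $X'\subseteq X$, every $b'\equiv b$ with $b'\ge b$ (more precisely, every $b'$ with $b'\ge b$ and $\gcd(X)\mid b'-b$, which is automatic when $b'=b+kL$) is representable using \emph{exactly} the same collection of supports: for any $Y\subseteq X$ with $b\in\icone(Y)$ we also have $b+L\in\icone(Y)$ (increase the coefficient of any element of $Y$ by $L/x_i\in\ZZ_{>0}$), so $\mathsf m_0(b+L)\le\mathsf m_0(b)$. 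The reverse inequality is the substantive direction: I must show that any support $Y$ realizing $b+L$ already realizes $b$. Suppose $b+L\in\icone(Y)$ with $|Y|=\mathsf m_0(b+L)$; since $\gcd(Y)\mid L$ and $L+b\in\icone(Y)$ with $b>N_0\ge F(Y)$, I claim $b\in\icone(Y)$: indeed $b\in\icone(\gcd(Y))$ because $\gcd(Y)\mid\gcd(X)\mid b$ would need justification — and here is the one genuine subtlety, since $b$ need not be divisible by $\gcd(X)$ a priori. But $b\in\icone(X)$ forces $\gcd(X)\mid b$, and then $\gcd(Y)\mid\gcd(X)$ gives $b\in\icone(\gcd(Y))$; combined with $b>F(Y)=\max(\icone(\gcd(Y))\setminus\icone(Y))$ we get $b\in\icone(Y)$, so $\mathsf m_0(b)\le|Y|=\mathsf m_0(b+L)$. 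Wait — $\gcd(Y)\mid b$ requires $\gcd(Y)\mid\gcd(X)$, which is false in general ($Y\subseteq X$ gives $\gcd(X)\mid\gcd(Y)$, not the reverse). The correct argument: $b+L\in\icone(Y)$ and $L\in\icone(Y)$ is \emph{not} automatic. Instead observe $\gcd(Y)\mid b+L$ and $\gcd(Y)\mid L$ (since $x_i\mid L$ for $x_i\in Y$), hence $\gcd(Y)\mid b$; now $b>F(Y)$ and $\gcd(Y)\mid b$ give $b\in\icone(Y)$. This closes the first part.

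For the minimality of $L$, suppose $\pi>0$ satisfies $\mathsf m_0(b+\pi)=\mathsf m_0(b)$ for all $b>N_0$; I must show $L\mid\pi$, equivalently $x_i\mid\pi$ for each $i$. Fix $i$ and consider the singleton support $Y=\{x_i\}$. The plan is to exhibit a value $b>N_0$ with $\mathsf m_0(b)=1$ realized \emph{only} via $x_i$, such that $b+\pi$ fails to be a multiple of $x_i$ unless $x_i\mid\pi$; then $\mathsf m_0(b+\pi)\ge 2>1=\mathsf m_0(b)$, a contradiction. A natural candidate is $b=m x_i$ for a large multiple $m$ chosen so that $b$ is not representable as $\ge$-combination supported on any other small support, nor as $\mathsf m_0(b)=1$ through a different generator $x_j$; this can be arranged by taking $m$ large with $mx_i$ avoiding all multiples of the other $x_j$'s in a suitable residue sense — e.g. pick $b$ coprime-ish to the other generators after dividing out. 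This step is where I expect to spend the most care, since I need $\mathsf m_0(b)=1$ with the unique witness $\{x_i\}$, and then $\mathsf m_0(b+\pi)=1$ forces $b+\pi$ to be a nonnegative multiple of \emph{some} single $x_j$; ruling out $j\ne i$ and concluding $x_i\mid\pi$ requires choosing $b$ in general enough position.

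The main obstacle, as flagged, is the minimality direction: constructing, for each generator $x_i$, a dilation value $b>N_0$ whose sparsest representations pin down divisibility by $x_i$ so tightly that a period $\pi$ not divisible by $x_i$ visibly breaks $\mathsf m_0$. The periodicity direction (first part) is essentially a clean application of the Frobenius-number definition of $N_0$ together with the monotone-substitution trick, modulo the small $\gcd$ bookkeeping above, so I expect it to be short. A sanity check is Example~\ref{e:eventualm0knap}: with $X=\{4,6,15\}$, $L=60$, and $85=25+60$ gave $\mathsf m_0(85)=2$, $\mathsf m_0(25)=3$ — consistent with $25$ lying below $N_0$ and the genuine drop disappearing once $b>N_0$.
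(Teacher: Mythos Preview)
Your argument for the periodicity direction, once you arrive at the corrected version, is correct and matches the paper's approach: $\mathsf m_0(b+L)\le\mathsf m_0(b)$ by padding any solution with $L/x_j$ extra copies of some $x_j$ already in its support, and $\mathsf m_0(b)\le\mathsf m_0(b+L)$ by taking a minimal-support $Y\subseteq X$ for $b+L$ and using $b>N_0\ge F(Y)$ together with $\gcd(Y)\mid b$ to conclude $b\in\icone(Y)$. Your explicit verification that $\gcd(Y)\mid b$ (from $\gcd(Y)\mid(b+L)$ and $\gcd(Y)\mid L$, the latter because every element of $Y$ divides $L$) fills in a step the paper leaves implicit.

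For the minimality of $L$, however, your plan is incomplete and more delicate than needed. You propose, for each generator $x_i$, to exhibit some $b>N_0$ with $\mathsf m_0(b)=1$ realized \emph{uniquely} through the singleton $\{x_i\}$, so that a putative smaller period $\pi$ would force $x_i\mid\pi$; but you never construct such a $b$, and the ``general position'' you appeal to is precisely the missing content (indeed, if some $x_j$ properly divides $x_i$, no multiple of $x_i$ can avoid being a multiple of $x_j$, so the unique-witness $b$ you seek need not exist). The paper argues more globally: the level set $\{b>0:\mathsf m_0(b)=1\}$ is exactly $T=\bigcup_i x_i\ZZ_{>0}$, so any eventual period of $\mathsf m_0$ is in particular an eventual period of the characteristic function $\chi_T$, and $\chi_T$ has minimal period $\lcm(X)$. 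This bypasses the need to isolate generators one at a time; you should replace your per-generator construction with this level-set observation.
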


\begin{example}
Resuming notation from Example \ref{e:eventualm0knap}, Figure~\ref{f:m0_plot} depicts values of the function $\mathsf m_0:\icone(X) \to \ZZ_{\ge 0}$. The periodic behavior ensured by Theorems~\ref{t:eventualm0affine} and~\ref{t:eventualm0knap} is evident, as we can see that $\mathsf m_0(b + 60) = \mathsf m_0(b)$ for each $b \ge 42$.

Algebraically, this occur because $L = 60$ lies in every subsemigroup of $\icone(X)$ generated by a proper subset of $X$, so adding $L$ to any element of $\icone(X)$ preserves membership in all such subsemigroups.  We must require $b$ be sufficiently large to ensure $b + 60$ is not contained in any additional subsemigroups not already containing $b$ (Figure~\ref{f:m0_polytope} depicts this phenomenon for $b = 25$).
\end{example}

\begin{figure}[h]
\begin{center}
\includegraphics[width=5.6in]{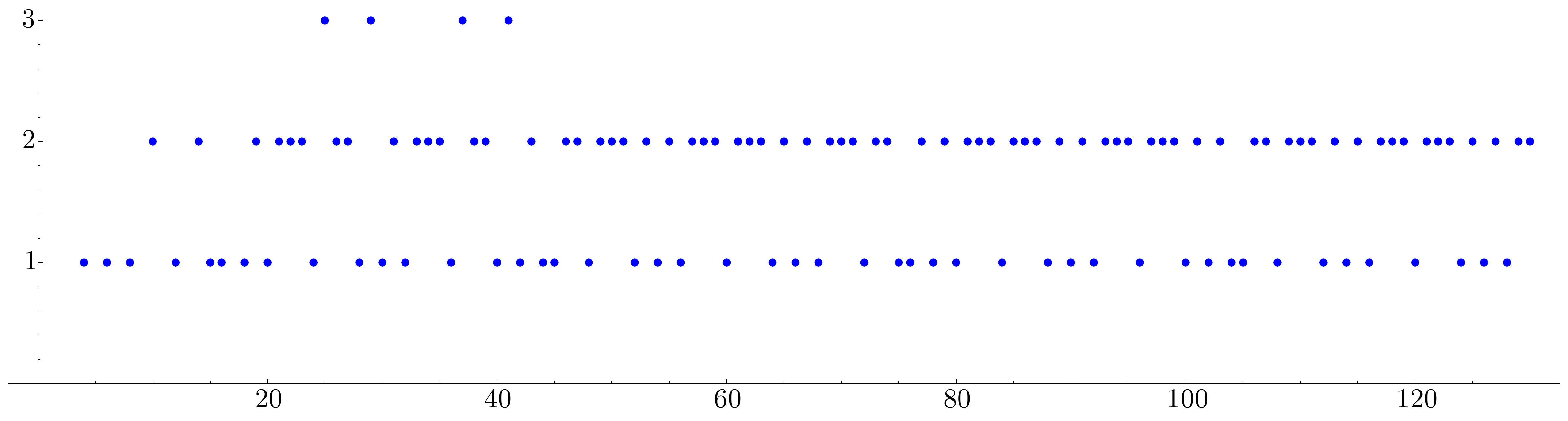}
\end{center}
\caption{Plot of $\mathsf m_0:\icone(X) \to \ZZ_{\ge 0}$ for $X = \{4,6,15\}$ in Example~\ref{e:eventualm0knap}.}
\label{f:m0_plot}
\end{figure}

Our study of the asymptotic behavior of $\mathsf m_0(\vv b)$ was inspired by \cite[Theorem~5.2]{factorhilbert}, a result that appeared in \cite{elastsets} for knapsack problems, stating that the value of the maximal $\ell_1$-norm is eventually quasilinear (Definition~\ref{d:eventualquasipolynomial}). In future work we will discuss the situation for other norms. The proof of Theorem \ref{t:eventualm0affine}, presented later in this section, uses generating functions and Hilbert's Theorem (Theorems~\ref{t:hilbert} and~\ref{t:hilbert2}).

\subsection{Proof of Theorems \ref{t:eventualm0affine} and \ref{t:eventualm0knap} through Commutative Algebra}

The following theorem, a consequence of the Hilbert Basis Theorem, characterizes the eventual behavior of Hilbert functions of certain $\ZZ_{\ge 0}$-graded modules.  For more background on Hilbert functions and graded modules, see \cite{cca,caca}.

\begin{defn}\label{d:hilbert}
Fix a field $\kk$ (if the reader prefer, he/she can assume $\kk=F_2$, but what follows works in full generality), let $R = \kk[z_1, \ldots, z_t]$, and fix an $R$-module $M$.  A \emph{$\ZZ_{\ge 0}^d$-grading of $R$} is a function $\deg:\ZZ_{\ge 0}^t
\to \ZZ_{\ge 0}^d$ satisfying
$$\deg(\vv y + \vv y') = \deg(\vv y) + \deg(\vv y')$$
for all $\vv y, \vv y' \in \ZZ_{\ge 0}^t$.  Here, $\deg(z_1^{y_1} \cdots z_t^{y_t}) = \deg(\vv y)$ represents the degree of the monomial $z_1^{y_1} \cdots z_t^{y_t}$ for $\vv y \in \ZZ_{\ge 0}^t$.  Let $R_\vv b$ denote the $\kk$-vector subspace of $R$ spanned by those $\vv y \in \ZZ_{\ge 0}^t$ satisfying $\deg \vv y = \vv b$.
An \emph{$\ZZ_{\ge 0}^d$-grading of $M$} is an expression
$$M \iso \bigoplus_{\vv b \in \ZZ_{\ge 0}^d} M_\vv b$$
of $M$ as a direct sum of finite dimensional $\kk$-subspaces of $M$ with $R_\vv b M_{\vv b'} \subset M_{\vv b + \vv b'}$ for all $\vv b, \vv b' \in \ZZ_{\ge 0}^d$.  The \emph{Hilbert function of $M$} is the function $\mathcal H(M;-):\ZZ_{\ge 0}^d \to \ZZ_{\ge 0}$ given by
$$\mathcal H(M;\vv b) = \dim_\kk M_\vv b$$
for each $\vv b \in \ZZ_{\ge 0}^d$.
\end{defn}

\begin{thm}[{\cite[Theorem~I.2.3]{caca}}]\label{t:hilbert}
Fix a $\ZZ_{\ge 0}$-graded polynomial ring $R$, and a finitely generated $\ZZ_{\ge 0}$-graded $R$-module $M$ of Krull dimension $d$.  The Hilbert function of $M$ eventually equals a quasipolynomial of degree $d-1$ (called the \emph{Hilbert quasipolynomial of $M$}).  More specifically, there exist periodic functions $a_0, \ldots, a_{d-1}:\ZZ_{\ge 0} \to \QQ$ such that $a_{d-1} \not\equiv 0$ and
$$\mathcal H(M;b) = a_{d-1}(b)b^{d-1} + \cdots + a_1(b)b + a_0(b)$$
for sufficiently large $b$.  Additionally, if $y_1, \ldots, y_d \in R$ is a homogeneous system of parameters for $M$, then the period of each $a_i$ divides $\lcm(\deg(y_1), \ldots, \deg(y_d))$.
\end{thm}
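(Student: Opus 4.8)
This statement is classical (it is quoted above from the reference); here is the route I would take. Everything is pushed onto a weighted polynomial subring of $R$ determined by a homogeneous system of parameters, via the Hilbert--Serre rationality theorem and the elementary theory of the denumerant. First I would fix a homogeneous system of parameters $y_1,\dots,y_d$ for $M$ (if none is prescribed, one exists by graded Noether normalization applied to $R/\operatorname{Ann}(M)$; in all cases $\deg y_l=c_l\ge 1$ since $R_0=\kk$), and set $A=\kk[y_1,\dots,y_d]\subseteq R$. Two routine facts must be recorded: (a) $M$ is a finitely generated graded $A$-module, by graded Nakayama, since $M/(y_1,\dots,y_d)M$ is finite dimensional over $\kk$; and (b) $y_1,\dots,y_d$ are algebraically independent over $\kk$, so $A$ is genuinely a weighted polynomial ring --- indeed $\dim_AM=\dim_RM=d$ forces $\dim A\ge d$, and a $\kk$-algebra generated by $d$ elements of dimension $d$ is the polynomial ring on them. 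The Hilbert function $\mathcal H(M;-)$ is intrinsic to the grading, so nothing is lost by computing it over $A$ from now on.

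The core computation has two inputs. By the Hilbert--Serre theorem over the weighted polynomial ring $A$,
\[
\sum_{b\ge 0}\mathcal H(M;b)\,z^b=\frac{Q_M(z)}{\prod_{l=1}^{d}(1-z^{c_l})}
\]
for some $Q_M\in\ZZ[z]$; I would prove this by induction on $d$ using the multiplication-by-$y_d$ four-term exact sequence, whose outer terms $(0:_My_d)$ and $M/y_dM$ are killed by $y_d$ and hence are finitely generated graded modules over $A/(y_d)\cong\kk[y_1,\dots,y_{d-1}]$. Separately, the denumerant $p(n)=\#\{\aa\in\ZZ_{\ge 0}^{d}:a_1c_1+\cdots+a_dc_d=n\}$ is, by the standard partial-fraction analysis of $\prod_l(1-z^{c_l})^{-1}$, a quasipolynomial in $n$ of degree $d-1$ whose period divides $L:=\lcm(c_1,\dots,c_d)$ --- every pole of $\prod_l(1-z^{c_l})^{-1}$ sits at an $L$-th root of unity, and the pole at $z=1$ has order exactly $d$. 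Writing $Q_M(z)=\sum_{k=0}^{N}q_kz^{k}$ and reading off coefficients yields, for every $b\ge N$, the identity $\mathcal H(M;b)=\sum_{k=0}^{N}q_k\,p(b-k)$: a finite integer combination of shifts of $p$, hence a quasipolynomial of degree at most $d-1$ and period dividing $L=\lcm(\deg y_1,\dots,\deg y_d)$. Since ``the period of a quasipolynomial divides $L$'' is precisely the eventual $L$-periodicity of its values, this already gives the periodicity assertion for each coefficient $a_i$.

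It remains to show the degree is exactly $d-1$, i.e.\ $a_{d-1}\not\equiv 0$. The coefficient of $b^{d-1}$ in $\mathcal H(M;b)$ is the $L$-periodic function $b\mapsto\sum_k q_k\,\alpha(b-k)$, where $\alpha\ge 0$, $\alpha\not\equiv 0$, is the leading coefficient of $p$; averaging over $b$ modulo $L$ collapses this to $Q_M(1)\cdot\bar\alpha$ with $\bar\alpha>0$, so it is enough to see $Q_M(1)\ne 0$. In fact $Q_M(1)=\rank_AM$: choosing a maximal free graded submodule $F\cong\bigoplus_kA(-a_k)$ of $M$ (rank $\rho=\rank_AM$) with torsion cokernel $C$, additivity of Hilbert series together with Hilbert--Serre for $C$ gives $Q_M(z)=\sum_kz^{a_k}+Q_C(z)$, while $\dim C<d$ forces $Q_C(1)=0$ (the pole of $Q_C(z)/\prod_l(1-z^{c_l})$ at $z=1$ has order $<d$). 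As $\dim_AM=d=\dim A$, $M$ is not $A$-torsion, so $\rho\ge 1$ and $Q_M(1)\ne 0$; hence $a_{d-1}\not\equiv 0$ and the degree is exactly $d-1$.

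The main obstacle is not conceptual: Hilbert--Serre rationality over a weighted polynomial ring and the denumerant being a period-$L$ quasipolynomial of degree $d-1$ are both standard, so the only point demanding care is the identification $Q_M(1)=\rank_AM$, which is exactly what forces the degree down to $d-1$ rather than something smaller; if one settled for ``degree $\le d-1$ with period dividing $L$'' the last paragraph could be omitted. A less self-contained alternative is to use a graded prime filtration of $M$ to reduce to $M=R/\pp$ and apply graded Noether normalization there, but then the sharp period bound $\lcm(\deg y_l)$ for the prescribed parameters no longer falls out directly.
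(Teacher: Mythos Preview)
The paper does not give its own proof of this theorem: it is stated with the citation \cite[Theorem~I.2.3]{caca} and used as a black box, so there is no in-paper argument to compare against. Your write-up is a correct outline of the standard proof (essentially the one in Stanley's reference): pass to a weighted polynomial subring $A=\kk[y_1,\dots,y_d]$ via a homogeneous system of parameters, invoke Hilbert--Serre rationality to get the generating function $Q_M(z)/\prod_l(1-z^{c_l})$, and read off the quasipolynomiality and the period bound from the partial-fraction expansion. The one step where you are a bit terse is the claim that $Q_C(1)=0$ for the torsion cokernel $C$; to avoid circularity you should say explicitly that a nonzero homogeneous element $a\in\operatorname{Ann}_A(C)$ exists (since $C$ is finitely generated torsion over the domain $A$), so $C$ is a module over a weighted polynomial ring in $d-1$ variables and a second application of Hilbert--Serre shows the pole of $H_C(z)$ at $z=1$ has order at most $d-1$, forcing $Q_C(1)=0$. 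With that small clarification your argument is complete.
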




The proof of Theorem~\ref{t:eventualm0affine} uses a generalization of Theorem~\ref{t:hilbert} to multigradings.

\begin{thm}[{\cite[Theorem~2.10]{factorhilbert}}]\label{t:hilbert2}
Fix a $\ZZ_{\ge 0}^d$-graded polynomial ring $R$, and a finitely generated $\ZZ_{\ge 0}^d$-graded $R$-module $M$.  The Hilbert function of $M$ is eventually quasipolynomial.
\end{thm}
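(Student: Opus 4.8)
The plan is to reduce the multigraded statement to the single-graded Theorem~\ref{t:hilbert} by d\'evissage, exploiting that the class of eventually quasipolynomial functions (Definition~\ref{d:eventualquasipolynomial}) is closed under the operations produced by a prime filtration. Since $\dim_\kk(-)$ is additive on short exact sequences of finite-dimensional graded pieces, the Hilbert function is additive: for a short exact sequence $0 \to M' \to M \to M'' \to 0$ of $\ZZ_{\ge 0}^d$-graded modules one has $\mathcal H(M;\vv b) = \mathcal H(M';\vv b) + \mathcal H(M'';\vv b)$, and eventual quasipolynomiality is visibly preserved under finite sums and under the integer shift $f(\vv b) \mapsto f(\vv b - \vv s)$ induced by a grading shift $M(\vv s)$. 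First I would invoke the graded prime filtration theorem: every finitely generated $\ZZ_{\ge 0}^d$-graded $R$-module $M$ admits a finite filtration $0 = M_0 \subsetneq M_1 \subsetneq \cdots \subsetneq M_n = M$ by graded submodules whose successive quotients are shifted cyclic modules $M_i/M_{i-1} \iso (R/\pp_i)(\vv s_i)$ with $\pp_i \subset R$ homogeneous prime and $\vv s_i \in \ZZ^d$. By additivity and the closure properties it then suffices to prove the theorem for $M = R/\pp$ with $\pp$ a homogeneous prime.

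Next I would induct on the Krull dimension of $M$; by the reduction just made we may assume $M = R/\pp$. In the base case $\dim R/\pp = 0$ we have $\pp = \mm = (z_1,\dots,z_t)$, so $R/\pp = \kk$ is concentrated in degree $\vv 0$ and its Hilbert function is the indicator of $\{\vv 0\}$, a simple quasipolynomial supported on the zero cone. For the inductive step, choose a variable $z_j \notin \pp$ (possible unless $\pp=\mm$); because $R/\pp$ is a domain, multiplication by $z_j$ is injective, giving the short exact sequence of graded modules
\begin{equation*}
0 \longrightarrow (R/\pp)(-\deg z_j) \xrightarrow{\,\cdot z_j\,} R/\pp \longrightarrow R/(\pp+(z_j)) \longrightarrow 0.
\end{equation*}
Since $z_j \notin \pp$, the module $R/(\pp+(z_j))$ has strictly smaller Krull dimension, so its Hilbert function is eventually quasipolynomial by the inductive hypothesis. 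Reading off Hilbert functions yields the first-difference relation
\begin{equation*}
\mathcal H(R/\pp;\vv b) - \mathcal H(R/\pp;\vv b - \deg z_j) = \mathcal H(R/(\pp+(z_j));\vv b),
\end{equation*}
which expresses the unknown function as an antidifference, in the fixed direction $\vv a := \deg z_j$, of a function already known to be eventually quasipolynomial.

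The heart of the argument, and the step I expect to be the main obstacle, is the summation lemma that inverts this first difference: if $H:\ZZ_{\ge 0}^d \to \ZZ_{\ge 0}$ is eventually quasipolynomial and $G$ satisfies $G(\vv b) - G(\vv b - \vv a) = H(\vv b)$ with $G$ supported on the finitely generated affine semigroup $\icone(\{\deg z_i : z_i \notin \pp\})$, then $G$ is eventually quasipolynomial. I would prove this by slicing $\ZZ_{\ge 0}^d$ into the cosets of the line $\ZZ\vv a$: along each coset the point is $\vv b_0 + k\vv a$, and $G$ is recovered from its value at the entry point of the support plus the telescoped partial sum $\sum_k H(\vv b_0 + k\vv a)$. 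The restriction of an eventually quasipolynomial function to an arithmetic progression is eventually a one-variable quasipolynomial in $k$, and its partial sums are again eventually quasipolynomial of degree one higher; this single-direction step is exactly the phenomenon governed by Theorem~\ref{t:hilbert}. The delicate part is the global reassembly into the form required by Definition~\ref{d:eventualquasipolynomial}: the entry points and the periodic correction terms vary with the coset and are controlled by the semigroup structure, and one must package the resulting slice-wise quasipolynomials into a single finite sum of simple quasipolynomials supported on translated simplicial cones. Achieving simpliciality requires a Stanley-type decomposition of the underlying affine semigroup into finitely many translated cones on linearly independent generators, and verifying that the degree, period, and support of each piece stabilize uniformly across slices is where the bulk of the technical bookkeeping lies.
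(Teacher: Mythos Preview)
The paper does not prove this theorem at all: it is quoted as \cite[Theorem~2.10]{factorhilbert} and invoked as a black box in the proof of Theorem~\ref{t:eventualm0affine}. There is therefore no proof in the present paper to compare your proposal against.

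As an independent assessment, your d\'evissage outline is the standard route to such a result and is sound in its broad strokes. The reduction to $R/\pp$ via a graded prime filtration, the induction on Krull dimension, and the short exact sequence yielding the first-difference relation are all correct and routine. You rightly identify the crux as the summation (antidifference) step: from $G(\vv b)-G(\vv b-\vv a)$ eventually quasipolynomial in the sense of Definition~\ref{d:eventualquasipolynomial}, together with appropriate support hypotheses on $G$, deduce that $G$ itself is eventually quasipolynomial. Your slice-and-telescope sketch is the natural idea, but you are also candid that the global reassembly into finitely many simple quasipolynomials supported on translated simplicial cones is where the genuine work lies, and that it requires a Stanley-type decomposition plus a uniformity argument across cosets. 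As written, then, this is a correct proof \emph{plan} rather than a complete proof; the missing bookkeeping is substantial but not conceptually mysterious, and is precisely what the cited reference \cite{factorhilbert} carries out.
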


We are now ready to prove Theorems~\ref{t:eventualm0affine} and~\ref{t:eventualm0knap}.

\begin{proof}[Proof of Theorem~\ref{t:eventualm0affine}]
We will use Theorem~\ref{t:hilbert2}.  Multigrade the polynomial ring $\kk[z_1, \ldots, z_t]$ by $\deg(z_i) = \vv x_i$ for $i \le t$.  The ideals
$$I_j = \left\<\textstyle\prod_{i \in T} z_i : T \subset \{1, \ldots, t\}, |T| = j\right\> \subset \kk[z_1, \ldots, z_t]$$
for $j \le t$ and $I_{t+1} = 0$ form a descending chain
$$I_1 \supset I_2 \supset \cdots \supset I_t \supset I_{t+1} = 0$$
with the property that for $j \le t$, $\mathcal H(I_j/I_{j+1};\vv b) > 0$ if and only if $\vv b \in \icone(X)$ has a solution $\vv y \in \mathsf P_X(\vv b)$ with $\|\vv y\|_0 = j$.  Indeed, each $I_j$ is determined by the monomials it contains, and a solution $\vv y \in \mathsf P_X(\vv b)$ satisfies $\|\vv y\|_0 = j$ if and only if $z_1^{y_1} \cdots z_t^{y_t} \in I_j \setminus I_{j+1}$.

Applying Theorem~\ref{t:hilbert2} to the quotient $I_j/I_{j+1}$ proves that the characteristic function
$$\chi_j(\vv b) = \left\{\begin{array}{ll}
1 & \text{ if } \|\vv y\|_0 = j \text{ for some } \vv y \in \mathsf P_X(\vv b) \\
0 & \text{ otherwise }
\end{array}\right.$$
is eventually quasiconstant for each $j \le t$, and thus
$$
\mathsf m_0(\vv b) = \max\{(t-j)\chi_j(\vv b) : j \le t\}$$
is eventually quasiconstant as well.
\end{proof}

\begin{proof}[Proof of Theorem~\ref{t:eventualm0knap}]
First, fix $\vv y \in \mathsf P_X(b)$.  Since $b > 0$, we have $y_j > 0$ for some $j \le t$.  Since $x_j \mid L$, the solution $\vv y + (L/x_j)\ee_j \in \mathsf P_X(b + L)$ has support $\supp(\vv y)$.  This proves $\mathsf m_0(b + L) \le \mathsf m_0(b)$.  Next, fix $\vv y \in \mathsf P_X(b + L)$ with $|\supp(\vv y)|$ minimal, and let $S' = \icone(\supp(\vv y))$.  Since $b > F(S')$, we have $b \in S'$ and thus $\mathsf m_0(b) \le |\supp(\vv y)|$.  We conclude $\mathsf m_0(b + L) = \mathsf m_0(b)$.

For the final claim, consider the characteristic function $\chi_T$ of the set $T = \bigcup_{i \le t} \icone(x_i)$.
Notice that $\chi_T(b)$ is nonzero precisely when $\mathsf m_0(b) = 1$.  The result follows from the fact that $\chi_T$ has minimal period $\lcm(X)$.
\end{proof}

\begin{remark}\label{r:eventualm0knap}
Notice that simply proving ``$\supseteq$'' in Theorem~\ref{t:eventualm0knap} is sufficient to prove eventual periodicity of $\mathsf m_0$ if a precise lower bound on the start of periodicity is not desired.  Indeed, it follows that for each $j < L$, the sequence $\{\mathsf m_0(kL + j)\}_{k \ge 1}$ is (eventually) a non-increasing sequence of positive integers, and thus eventually constant.
\end{remark}

We conclude this section by considering the following problem: ``Give a bound on $\mathsf m_0(b)$ that holds for all but finitely many $b \in \icone(X)$.''  Proposition~\ref{p:eventualboundm0knap} gives an optimal answer in the knapsack setting.

\begin{prop}\label{p:eventualboundm0knap}
If $m$ is the minimal size of a relatively prime subset of $X \subset \ZZ_{\ge 0}$, then $\mathsf m_0(b) \le m$ for all but finitely many $b \in \icone(X)$.  More specifically, let $X_1, \ldots, X_r$ denote the relatively prime subsets of $X$ with cardinality $m$, and let
$$N_0 = \max\left(\textstyle\bigcap_{i = 1}^r \ZZ_{\ge 0} \setminus \icone(X_i)\right).$$
Then $\mathsf m_0(b) \le m$ for all $b > N_0$, and $\mathsf m_0(b) = m$ for infinitely many $b > N_0$ (that is to say, the eventual bound $m$ is sharp).
\end{prop}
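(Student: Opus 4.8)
The plan is to prove the two halves of the statement — the upper bound $\mathsf m_0(b) \le m$ for $b > N_0$, and the sharpness $\mathsf m_0(b) = m$ for infinitely many $b > N_0$ — using elementary facts about numerical semigroups together with the structure already developed in the proof of Theorem~\ref{t:eventualm0knap}.

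For the upper bound, first I would observe that a subset $X' \subset X$ supports a solution for $b$ (i.e. $b \in \icone(X')$) only if $\gcd(X') \mid b$, so it is natural to stratify by the gcd. The key elementary fact is that for any $X' \subset \ZZ_{>0}$ with $g = \gcd(X')$, the set $\icone(\gcd(X'))\setminus \icone(X')$ is finite — this is exactly $F(X')$ from the discussion preceding Theorem~\ref{t:eventualm0knap} (the classical Frobenius number, rescaled by $g$). Consequently, for each relatively prime subset $X_i$ of size $m$, all sufficiently large multiples of $1$ — i.e. all sufficiently large integers — lie in $\icone(X_i)$; by the definition of $N_0$ as $\max\bigl(\bigcap_{i=1}^r \ZZ_{\ge 0}\setminus\icone(X_i)\bigr)$, every $b > N_0$ lies in $\icone(X_i)$ for at least one $i$, hence $\mathsf m_0(b) \le |X_i| = m$. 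Here one must check that $N_0$ is genuinely finite, which again reduces to the finiteness of each $\ZZ_{\ge 0}\setminus\icone(X_i)$ and the fact that the intersection of cofinite sets is cofinite, so its maximum exists.

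For sharpness, I need infinitely many $b > N_0$ with $\mathsf m_0(b) = m$, equivalently with no proper subsemigroup hitting $b$ via fewer than $m$ generators. The minimality of $m$ means every subset $X'$ of $X$ with $|X'| < m$ has $\gcd(X') = g_{X'} > 1$. Take $b$ to run over integers coprime to the (finite) product $\prod_{|X'|<m}\gcd(X')$ — better, over a suitable arithmetic progression avoiding all these gcd's simultaneously, which exists by CRT since each $\gcd(X') > 1$ — and also large enough to exceed $N_0$. For such $b$, no subset of size less than $m$ can represent $b$ (its gcd doesn't divide $b$), so $\mathsf m_0(b) \ge m$; combined with the upper bound already shown, $\mathsf m_0(b) = m$. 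There are infinitely many such $b$ because the relevant congruence class modulo $\prod \gcd(X')$ is nonempty and infinite.

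The main obstacle — and the only point requiring genuine care — is the sharpness direction: ensuring there are \emph{infinitely many} valid $b$ rather than just one, and confirming that the congruence conditions forced by the many small subsets $X'$ with $|X'|<m$ are jointly satisfiable. The cleanest way is: let $P = \prod_{X'\subset X,\, |X'|<m}\gcd(X')$, which is a well-defined integer $\ge 2$ (each factor is $\ge 2$ by minimality of $m$, and the product is over a finite set); then every $b$ with $\gcd(b,P)=1$ and $b > N_0$ has $\mathsf m_0(b)=m$, and such $b$ form an infinite set since $\gcd(\cdot,P)=1$ defines a nonempty union of residue classes mod $P$. One should double-check the edge cases $m=1$ (then $X$ itself has a relatively prime singleton, $P$ is an empty product equal to $1$, and every large $b$ works) and the possibility that some $X_i$ has $1 \in X_i$ (then $\icone(X_i) = \ZZ_{\ge 0}$ and $N_0$ may be $-\infty$ or undefined, which is harmless — interpret "for all but finitely many $b$" as "for all $b$").
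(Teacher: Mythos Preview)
Your proposal is correct and follows essentially the same approach as the paper's proof. The upper bound is identical; for sharpness, the paper simply observes that each $\icone(X')$ with $|X'| < m$ sits inside the proper subset $\icone(\gcd(X')) \subsetneq \ZZ_{\ge 0}$, so the complement $\icone(X) \setminus \bigcup_{|X'|<m}\icone(X')$ is infinite --- your argument via coprimality to $P = \prod_{|X'|<m}\gcd(X')$ is the explicit, constructive version of the same observation.
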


\begin{proof}
Every $b > N_0$ lies in $\<X_i\>$ for some $i \le r$, so $\mathsf m_0(b) \le |X_i| = m$.  By the minimality of $m$, we have $\icone(X') \subset \icone(\gcd(X')) \subsetneq \ZZ_{\ge 0}$ for any subset $X' \subset X$ with $|X'| < m$.  In particular, the set
$$\icone(X) \setminus \bigcup_{|X'| < m} \icone(X')$$
has infinite cardinality, and is precisely the set of elements $b \in \icone(X)$ with $\mathsf m_0(b) \ge m$.  The second claim now follows from the first.
\end{proof}

\begin{example}\label{e:eventualboundm0knap}
Return to $X = \{4,6,15\}$ as in Example~\ref{e:eventualm0knap}.  Proposition~\ref{p:eventualboundm0knap} ensures $\mathsf m(b) \le 2$ for all $b \ge 42$, even though $M_0(X) = 3$ is achieved at four distinct values prior to $b = 42$.  Algebraically, this is because $\gcd(4,15) = 1$, so every $b \ge 42$ lies in $\icone(4,15)$ and thus satisfies $\mathsf m_0(b) \le 2$.  However, $\mathsf m_0(25) = 3$, since $(1,1,1)^T$ is the only solution in $\mathsf P_X(25)$.
\end{example}

\subsection*{Acknowledgements}%

The second author acknowledges support from NSF DMS-grant 1522158.
We are grateful to Lenny Fushansky, Martin Henk, Fr\'ed\'eric Meunier, and Daniele Micciancio
for their very useful comments and references. We would also like to thank the referees for their valuable
suggestions.


\end{document}